\newtheorem{thm}{Theorem}[section]
\newtheorem{cor}[thm]{Corollary}
\newtheorem{prop}[thm]{Proposition}
\theoremstyle{definition}
\theoremstyle{remark}
\newtheorem{rem}[thm]{Remark}
\numberwithin{equation}{section}
\newcommand{\Real}{\mathbb R}
\begin{document}

\title[Higher order energy decay rates for damped wave equations]
{Higher order energy decay rates for damped wave equations with variable coefficients}
\author{Petronela Radu, Grozdena Todorova and Borislav Yordanov}
\address{UN-Lincoln, UT-Knoxville, UT-Knoxville}
\email{pradu@math.unl.edu, todorova@math.utk.edu, yordanov@math.utk.edu}
\subjclass{ Primary 35L05, 35L15; Secondary 37L15}
\keywords{wave equations with variable coefficients, hyperbolic diffusion, linear dissipation, decay rates, higher order energy }
\date{\today}
\begin{abstract}
Under appropriate assumptions the energy of wave equations
with damping and variable coefficients $c(x)u_{tt}-\hbox{div}(b(x)\nabla
u)+a(x)u_t =h(x)$ has been shown to decay. Determining the rate of decay for the higher order energies involving the $k$th order spatial and time derivatives has been an open problem with the exception of some sparse results obtained for $k=1,2,3$. We establish estimates that optimally relate the higher order energies with the first order energy by carefully analyzing the effects of linear damping. The results concern weighted (in time) and also pointwise (in time) energy decay estimates. We also obtain $L^\infty$ estimates for the solution $u$. As an application we compute explicit decay rates for all energies which involve the dimension $n$ and the bounds for the coefficients $a(x)$ and $b(x)$ in the case $c (x)=1$ and  $h(x)=0.$
\end{abstract}
\maketitle

\section{Introduction}

We will study hyperbolic equations of the form
\begin{equation}\label{waveL}
 c(x)u_{tt}-\hbox{div}(b(x)\nabla u)+a(x)u_t =h(x,t),
 \quad x\in {\bf R}^n,\ \ t>0,
\end{equation}
where $a,b,c,$ and $h$ satisfy the assumptions listed below.
This system is generally accepted as a model of wave propagation in a heterogeneous medium with friction (given by $a(x)u_t$) and source terms $h(x,t)$. The coefficient $c(x)$ accounts for variable mass density, while $b(x)$ is responsible for temperature changes as the wave travels in space (see the derivation in \cite{Ika}). More surprisingly, (\ref{waveL}) has been considered as a model for {\it heat propagation} by Cattaneo \cite{C}, and independently by Vernotte \cite{V}. In this setting the constitutive equation for the flux $q$ as given by
\begin{equation}\label{CV}
q_t+ a(x) q=b(x)\nabla u
\end{equation}
replaces the classical Fourier's law $q= b(x)\nabla u$. The Cattaneo-Vernotte equation
(\ref{CV}) is usually considered as appropriate to describe unsteady heat conduction. For more on the applicability of this model see \cite{D, JP1, JP2}. The finite speed of propagation feature that the solutions enjoy under this formulation renders this system as a more accurate way to study the real effects of heat diffusion. We will show later how these aspects regarding the physical interpretation of (\ref{waveL}) tie in with the mathematical analysis of finding the decay rates, but we believe that this paper offers compelling evidence as to why the diffusion phenomenon is an essential feature for the damped wave equations.

 The literature is replete with results on energy estimates for hyperbolic systems with linear damping, but they mainly concern the case when $b(x)=c(x)=1$ and $h(x,t)=0$.
In \cite{RTY} we considered wave equations with variable coefficients in the case $c(x)=1,$ $h(x,t)=0$ for which we obtained explicit decay rates that depend on the dimension and the growth of the coefficients. The main goal of this work is to show how the energies are related to each other; more precisely, we deduce decay rates for the higher order energies in terms of the decay rate for the first energy $E(t)$. This issue was addressed for example in \cite{Nak} where the author shows that for $c(x)=b(x)=1,$ and $h(x,t)=0$ the energy has the following decay rate
\[
 E(t;u): = \frac{1}{2}(||u_t(t)||^2+||\nabla u(t)||^2) \lesssim (1+t)^{-1},
\]
while for the second order energy the decay rate improves so one has
\[
 E(t;u_t)=\frac{1}{2}(||u_{tt}(t)||^2+||\nabla u_t(t)||^2) \lesssim (1+t)^{-2}.
\]
By using these estimates in the equation one consequently obtains
\[
||\Delta u(t)||^2 \lesssim (1+t)^{-1}.
\]
 The idea of using the diffusion effect in order to show a faster decay for the higher order energies is also present in \cite{FM} where the authors show that for the elasticity system with linear damping $u_t$ one has
 \[
 E(t;u)+E(t;u_t)+E(t;u_{tt}) \lesssim (1+t)^{-2}.
 \]

One expects, however, that the gain in the decay rate when moving from the first energy to the second should be $t^{-2}$. This is motivated by the fact that the long time behavior of the linearly damped wave equation with constant coefficients
\begin{equation}\label{dampwave}
u_{tt}-\Delta u +a u_t=0
\end{equation}
 resembles the behavior of the corresponding parabolic equation
 \begin{equation}\label{heat}
-\Delta u+ au_t =0,
 \end{equation}
  as it was suggested in \cite{MY}, \cite{Nar}, \cite{TY2}. By estimating the higher order energies for the Gaussian as a solution of (\ref{heat}), one notices that $E(t;\partial_t^{k+1} u)$ decays faster than $E(t;\partial_t^k u)$ by a factor of $t^{-2}$. This suggests that a similar phenomena should occur for (\ref{dampwave}). Indeed, our paper shows that this conjecture is correct, so in the absence of other effects caused by inhomogeneities, one basically has
\[
E(t;\partial_t^k u) \lesssim E(t,u) (1+t)^{-2k}, \quad k=0,1,2,...
\]
 In this spirit, by matching the gain in decay $t^{-2}$ of $E(t;\partial_t^{k+1} u)$ versus $E(t;\partial_t^k u)$ obtained for diffusion problems we show the {\it optimality} of our decay results for higher order energies.  One of the most important contributions of this paper is developing a methodology for determining the rates of decay for energies of \textbf{all} orders for a variety of hyperbolic systems with linear damping.

The paper is organized as follows. In the next section we present the assumptions for our results, after which we present the main results regarding the decay of the higher order energies involving time derivatives. The weighted decay of the spatial derivatives is obtained for orders $k=2$ and $k=4$ from which we deduce $L^{\infty}$ estimates of the solution in dimension $n=3$. We apply these results in the case $c=1, h=0$ in section 6. In the Appendix one can find the theorem and proof regarding the finite speed of propagation of solutions of (\ref{waveL}) for variable $b$ and $c$.

\section{Main Assumptions}

Let $a,$ $b,$ and $c$ be smooth coefficients satisfying the conditions:
\begin{eqnarray}\label{abcL}
 \nonumber & & a_0(1+|x|)^{-\alpha}\leq a(x) \leq a_1(1+|x|)^{-\alpha},\\
          & & b_0(1+|x|)^{\beta}\leq  b(x)  \leq b_1(1+|x|)^{\beta},\\
 \nonumber & & c_0(1+|x|)^{-\gamma}\leq  c(x) \leq c_1(1+|x|)^{-\gamma},
\end{eqnarray}
where $a_i, b_i, c_i,$ $i=0,1$ are positive constants and the exponents $\alpha, \beta, \gamma$ satisfy:
\begin{align}\label{abg}
2-\beta-\gamma<2, \quad 2\alpha+\beta-\gamma<2.
\end{align}

We consider initial data
\begin{equation}\label{dataL}
 u|_{t=0}=u_0\in H^1(\Real^n),\quad u_t|_{t=0}= u_1\in
L^2(\Real^n),
\end{equation}
and a source $h\in L^\infty([0,\infty),L^2({\bf R}^n)).$ The three
functions have compact support:
\begin{eqnarray}\label{cs}
& & u_0(x)=0,\ \ u_1(x)=0\ \  \hbox{for}\ \ |x|>R, \\
& & h(x,t)=0\ \ \hbox{for}\ \  q(|x|)>t+q(R),
\end{eqnarray}
where $q$ which measures the increase in the support of $u$ is given in the Appendix.
\\

\section{Weighted $L^2$ estimates for time derivatives}

The strategy behind determining the rate of decay for $E(t;\partial_t^{k} u)$ in terms of the decay for $E(t; u)$ is based on some simple facts. First, since $\partial^{k}_t u$ solves the equation (\ref{dataL}) with the different inhomogeneity $\partial^{k}_t h$, the higher order energy $E(t;\partial_t^{k} u)$ satisfies (almost) the same decay estimate as $E(t;u)$ with possibly different constants (the actual decay will depend on $\partial^{k}_t h$). The question is whether these decay rates can be improved similarly to the case $a=b=c=1$ and $h=0$ where the Fourier representation of $u$ shows faster decay rates of norms involving higher derivatives.

Higher order spatial norms are estimated from
$$\hbox{div}(b(x)\nabla u)=a(x)u_t+c(x)u_{tt}-h(x,t),$$
since taking directly $x$-derivatives produces new terms and changes the dissipative nature
of this equation. In other words, we will express the $x$-derivatives in terms of the $t$-derivatives.

Let us recall the definition
\[
E(t;u)=\frac{1}{2}\int(cu_t^2+b|\nabla u|^2)\: dx
\]
and energy identity
\begin{equation}\label{egid}
\frac{d}{dt}E(t;u)+\int au_t^2\: dx =\int hu_t\: dx.
\end{equation}
A simple consequence is the following.

\begin{prop}
\label{propE1} Assume that $0\leq T_0\leq T$ and $\mu\geq 0.$ Then
\begin{eqnarray*}
\int_{T_0}^T  \int (1+t)^{\mu} a(x) u_t^2\: dxdt & \lesssim & (1+T_0)^{\mu}E(T_0;u)+\int_{T_0}^T (1+t)^{\mu-1}E(t;u) dt\\
 & & +\int_{T_0}^{T}\int (1+t)^{\mu} \frac{h^2(x,t)}{a(x)}\: dxdt.
\end{eqnarray*}
\end{prop}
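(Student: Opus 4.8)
The plan is to multiply the energy identity \eqref{egid} by the weight $(1+t)^{\mu}$ and integrate in time over $[T_0,T]$, then dispose of the two resulting terms — the weighted time derivative of the energy and the weighted source term — separately. Writing \eqref{egid} as $(1+t)^{\mu}\int a u_t^2\,dx = -(1+t)^{\mu}\frac{d}{dt}E(t;u) + (1+t)^{\mu}\int h u_t\,dx$ and integrating over $[T_0,T]$ reduces everything to estimating these two contributions.

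For the first term I would integrate by parts in $t$. This produces the boundary contributions $(1+T_0)^{\mu}E(T_0;u) - (1+T)^{\mu}E(T;u)$ together with $\mu\int_{T_0}^{T}(1+t)^{\mu-1}E(t;u)\,dt$. Since $E(t;u)\ge 0$, the boundary term at $T$ is nonpositive and may simply be discarded, leaving exactly the first two terms on the right-hand side of the claimed inequality (with the harmless constant $\mu$ absorbed into $\lesssim$).

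For the source term I would apply Young's inequality in the form $\int h u_t\,dx \le \frac12\int \frac{h^2}{a}\,dx + \frac12\int a u_t^2\,dx$, obtained by splitting $h u_t = \bigl(h/\sqrt{a}\bigr)\bigl(\sqrt{a}\,u_t\bigr)$ so that the weight $a$ enters naturally and reproduces the $h^2/a$ structure of the target. After weighting by $(1+t)^{\mu}$ and integrating, this yields the third term on the right together with a copy of $\frac12\int_{T_0}^{T}(1+t)^{\mu}\int a u_t^2\,dx\,dt$.

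The key step — though hardly a real obstacle — is the absorption: this last term is precisely half of the left-hand side, so I would move it across and absorb it, which doubles the remaining constants and accounts for the implicit factors in $\lesssim$. The only point requiring any care is that the Young coefficient multiplying $\int a u_t^2$ be strictly less than $1$, since otherwise the absorption is illegitimate; the choice $\frac12$ both guarantees this and keeps the constant in front of the $h^2/a$ term clean.
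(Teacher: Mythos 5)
Your proposal is correct and follows essentially the same route as the paper: the paper likewise rewrites the weighted energy identity as $\frac{d}{dt}[(1+t)^{\mu}E(t;u)]$ plus the term $\mu(1+t)^{\mu-1}E(t;u)$, applies the same Young splitting $|hu_t|\leq \frac{1}{2}au_t^2+\frac{1}{2}a^{-1}h^2$, integrates over $[T_0,T]$, and absorbs the half copy of the left-hand side while dropping the nonnegative boundary term at $t=T$. No gaps; your explicit attention to the absorption coefficient being strictly less than $1$ is the only point of care, and you handled it correctly.
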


{\bf Proof.} The energy identity (\ref{egid}) implies
\[
\frac{d}{dt}[(1+t)^{\mu} E(t;u)]+(1+t)^{\mu} \int au_t^2 dx =  (1+t)^{\mu} \int hu_t dx+\mu(1+t)^{\mu-1} E(t;u).
\]
Notice that $|hu_t|\leq au_t^2/2+a^{-1}h^2/2.$ The result follows from integration on $[T_0,T].$
\qed

We can now proceed with higher-order norms. Let $v=u_t$ to simplify notations. Then
\[
 c(x)v_{tt}-\hbox{div}(b(x)\nabla v)+a(x)v_t =h_t(x,t),
 \quad x\in {\bf R}^n,\ \ t>0,
\]
and
\begin{equation}
\label{E(v)}
\frac{1}{2}\frac{d}{dt}\int(cv_t^2+b|\nabla v|^2)\: dx+\int av_t^2\: dx =\int h_tv_t\: dx.
\end{equation}
In order to obtain integrals of $E(t;v),$ we multiply the equation
for $v$ with $\frac{1}{2}W(t)v$ where $W$ is to be chosen later. Integration on ${\bf R}^n$
yields
\begin{eqnarray}
\nonumber
& & \frac{1}{2}\frac{d}{dt}\int\left(Wcv_tv+\frac{Wa-W_tc}{2}v^2\right)\: dx-\int Wcv_t^2\: dx\\
\label{H(v)}
& & +\frac{1}{2}\int \left(Wcv_t^2+Wb|\nabla u|^2+\frac{W_{tt}c-W_ta}{2}v^2\right)\: dx\\
& &= \frac{1}{2}\int Wh_tv\: dx.
\nonumber
\end{eqnarray}
We will choose $W$ satisfying
\begin{equation}\label{wass}
W(t)\leq \inf_{x\in\ {\rm supp}\: u(\cdot,t)}\frac{a(x)}{c(x)},\quad W_{tt}c-W_ta\geq 0 .
\end{equation}

The support of the solution $u$ at time $t$ is given by
\[
{\rm supp}\: u(\cdot,t)=\{x\in \Real^n: |x|\leq R+Ct^{\frac{2}{2-\beta-\gamma}}\}
\]
for some $C>0$, as a consequence of the Corollary \ref{q>} from the Appendix.

Adding (\ref{E(v)}) and (\ref{H(v)}), we have
\begin{eqnarray}
\nonumber
& & \frac{1}{2}\frac{d}{dt}\int\left(cv_t^2+b|\nabla v|^2+ Wcv_tv+\frac{Wa-W_tc}{2}v^2\right)\: dx
+\int (a-Wc)v_t^2\: dx\\
\label{EH(v)}
& & +\frac{1}{2}\int \left(Wcv_t^2+Wb|\nabla v|^2+\frac{W_{tt}c-W_ta}{2}v^2\right)\: dx\\
& = & \int \left(h_tv_t+\frac{1}{2}Wh_tv\right)\: dx.
\nonumber
\end{eqnarray}
Assume that $W$ also satisfies
\begin{equation}\label{wass1}
0\leq cv_t^2+b|\nabla v|^2+ Wcv_tv+\frac{Wa-W_tc}{2}v^2
\leq 2 cv_t^2+b|\nabla v|^2+C_0Wav^2,
\end{equation}
where $C_0$ is a constant; later we give explicit conditions in terms of $W,$ $a$ and $c.$

Next multiply identity (\ref{EH(v)}) by $(1+t)^{\nu}$ and integrate on $[T_0,T]$:
\begin{eqnarray*}
\nonumber
& & \frac{1}{2}\int_{T_0}^T(1+t)^{\nu}\frac{d}{dt}\int\left(cv_t^2+b|\nabla v|^2+
Wcv_tv+\frac{Wa-W_t c}{2}v^2\right)\: dxdt \\
\label{EH(v)1}
& & +\frac{1}{2}\int_{T_0}^T \int (1+t)^{\nu}\left(Wcv_t^2+Wb|\nabla v|^2\right)\: dxdt\\
& \leq & \int_{T_0}^T\int (1+t)^{\nu}\left(h_tv_t+\frac{1}{2}Wh_tv\right)\: dxdt.
\nonumber
\end{eqnarray*}
Hence,
\begin{eqnarray*}
& & \frac{1}{2}\int_{T_0}^T \int (1+t)^{\nu}\left(Wcv_t^2+Wb|\nabla v|^2\right)\: dxdt \\
& \leq & \frac{1}{2}(1+T_0)^{\nu}\left[\int \left(cv_t^2+b|\nabla v|^2+ Wcv_tv+\frac{Wa-W_tc}{2}v^2\right)\: dx\right]_{t=T_0} \\
& & +\frac{\nu}{2}\int_{T_0}^T \int (1+t)^{\nu -1}\left( cv_t^2+b|\nabla v|^2+
Wcv_tv+\frac{Wa-W_tc}{2}v^2\right)\: dxdt\\
& &+ \int_{T_0}^T\int (1+t)^{\nu}\left(h_tv_t+\frac{1}{2}Wh_tv\right)\: dxdt.
\end{eqnarray*}
Our assumptions on the quadratic form yield a simpler estimate:
\begin{eqnarray*}
& &\int_{T_0}^T \int (1+t)^{\nu}\left(Wcv_t^2+Wb|\nabla v|^2\right)\: dxdt \\
& \leq & (1+T_0)^{\nu}\left[\int \left(cv_t^2+b|\nabla v|^2+ C_0Wav^2\right)\: dx\right]_{t=T_0} \\
& & +\nu\int_{T_0}^T \int (1+t)^{\nu-1}\left(cv_t^2+b|\nabla v|^2+C_0Wav^2\right)\: dxdt\\
& &+ \int_{T_0}^T\int (1+t)^{\nu}\left(2h_tv_t+Wh_tv\right)\: dxdt.
\end{eqnarray*}

Choose $T_0$ sufficiently large, such that
\begin{equation}\label{nu}
(1+t)^{\nu}W >2\nu(1+t)^{\nu-1}\quad \text{                       for   } t>T_0,
\end{equation}
so by combining similar terms we obtain
\begin{eqnarray}\label{weg}
\nonumber& &\frac{1}{2}\int_{T_0}^T \int (1+t)^{\nu}\left(Wcv_t^2+Wb|\nabla v|^2\right)\: dxdt \\
& &\leq  (1+T_0)^{\nu}\left[\int \left(cv_t^2+b|\nabla v|^2+ C_0Wav^2\right)\: dx\right]_{t=T_0} \\
\nonumber& & +\nu C_0\int_{T_0}^T \int (1+t)^{\nu-1}Wav^2\: dxdt+ \int_{T_0}^T\int (1+t)^{\nu}\left(2h_tv_t+Wh_tv\right)\: dxdt.
\end{eqnarray}

We can estimate the last integral as follows:
\begin{eqnarray}\label{last}
\nonumber&&\int_{T_0}^T\int (1+t)^{\nu}\left(2h_tv_t+Wh_tv\right) dxdt \\
&&\leq \int_{T_0}^T \int \left[ \varepsilon (1+t)^{\nu} c W v_t^2 +C(\varepsilon)(1+t)^{\nu} (cW)^{-1}h_t^2\right] dxdt\\
\nonumber&&+ \int_{T_0}^T\int \left[ \frac{1}{2}(1+t)^{\nu-1}Wav^2 +\frac{1}{2}(1+t)^{\nu+1}Wa^{-1}h_t^2 \right] dx dt
\end{eqnarray}
where $0<\varepsilon<1/4$.

Choose a smooth function $W$ such that it satisfies
\begin{equation}\label{wexp}
w_1(1+t)^{-\omega}\leq W(t)\leq w_2 (1+t)^{-\omega}
\end{equation}
for some $0<\omega<1$ and $w_1,w_2>0$. This implies that
\[
(1+t)^{\nu} (cW)^{-1}h_t^2 \lesssim (1+t)^{\nu+1}Wa^{-1}h_t^2,
\]
which together with (\ref{last}) simplifies (\ref{weg}) to the following:
\begin{eqnarray}\label{weg1}
\nonumber& &\int_{T_0}^T \int (1+t)^{\nu}\left(Wcv_t^2+Wb|\nabla v|^2\right)\: dxdt \\
& &\lesssim   (1+T_0)^{\nu}\left[\int \left(cv_t^2+b|\nabla v|^2+  Wav^2\right)\: dx\right]_{t=T_0} \\
\nonumber& & +\int_{T_0}^T \int (1+t)^{\nu-1}Wav^2\: dxdt+ \int_{T_0}^T\int (1+t)^{\nu+1}Wa^{-1}h_t^2\: dxdt.
\end{eqnarray}

The growth assumptions from (\ref{wexp}) used in this last inequality yield:
\begin{eqnarray}\label{weg2}
\nonumber& &\int_{T_0}^T \int (1+t)^{\nu-\omega}\left(cv_t^2+b|\nabla v|^2\right)\: dxdt \\
& &\lesssim  (1+T_0)^{\nu}\left[\int \left(cv_t^2+b|\nabla v|^2+ Wav^2\right)\: dx\right]_{t=T_0} \\
\nonumber& & +\int_{T_0}^T \int (1+t)^{\nu-\omega-1}av^2\: dxdt+  \int_{T_0}^T\int (1+t)^{\nu-\omega+1}a^{-1}h_t^2\: dxdt.
\end{eqnarray}

By using the estimate from Proposition \ref{propE1} with $\mu=\nu-\omega-1$ and (\ref{wass}) we derive the following estimate:
\begin{eqnarray}\label{weg3}
& &\int_{T_0}^T \int (1+t)^{\nu-\omega}E(t;v)\: dxdt \lesssim (1+T_0)^{\nu}[E(T_0;v)+E(T_0;u)] \\
\nonumber& & +\int_{T_0}^T (1+t)^{\nu-\omega-2}E(t;u) dt+ \int_{T_0}^T\int (1+t)^{\nu-\omega-1}a^{-1}((1+ t)^2h_t^2+h^2)\: dxdt.
\end{eqnarray}

The proof is complete provided we show:
\\[.2in]
\textbf{Existence of the weight function $W$}.
Let
\begin{equation}\label{weightW}
W(t)=w_0(1+t)^{-\omega},
\end{equation}
where :

(i) $w_1\leq w_0 \leq w_2$ with $w_1, w_2$ given by (\ref{wexp}).

(ii) the exponent $\omega$ is chosen such that
\begin{equation}\label{omega}
\max\left\{0, \frac{2(\alpha-\gamma)}{2-\beta-\gamma}\right\}<\omega <1.
\end{equation}

\noindent This choice for the weight $W$ satisfies all the constraints as we show below:
\begin{itemize}
\item The inequality (\ref{wass})$_1$. By (\ref{abcL}) it suffices to show:
\[
w_0(1+t)^{-\omega} \leq \frac{a_0}{c_1} \inf_{|x|\leq R+Ct^{\frac{2}{2-\beta-\gamma}}}(1+|x|)^{\gamma-\alpha}.
\]
For $\alpha \leq \gamma$ this is obvious since $(1+t)^{-\omega} \to 0$ as $t\to \infty$, while in the right hand side we have $(1+|x|)^{\gamma-\alpha}>1.$ For $\alpha >\gamma$ we need to show
\[
w_0(1+t)^{-\omega} \leq \frac{a_0}{c_1} (R+Ct^{\frac{2}{2-\beta-\gamma}})^{\gamma-\alpha},
\]
but for sufficiently large times $t$ this holds since   $\omega > \frac{2(\alpha-\gamma)}{2-\beta-\gamma}$ by (\ref{omega}).

\item (\ref{wass})$_2$ holds since  for $W$ given by (\ref{weightW}) we have $W_{tt} \geq 0, \, W_t<0$ and the coefficients $a$ and $c$ are positive.
\item For the left inequality in (\ref{wass1}) we first complete the square in $v$ and $v_t$, so we are left to show
\[
2Wa-2W_tc-W^2 c\geq 0.
\]
By using (\ref{abcL}) and (\ref{weightW}) we reduce the problem of proving the above inequality to showing:
\begin{multline}\label{in1}
2w_0 a_0 (1+t)^{-\omega}(1+|x|)^{-\alpha} +2w_0\omega c_0(1+t)^{-\omega-1}(1+|x|)^{-\gamma} \\-w_0^2c_1(1+t)^{-2\omega}(1+|x|)^{-\gamma} \geq 0.
\end{multline}
This can be simplified to
\begin{equation}\label{in3}
2w_0 a_0(1+|x|)^{-\alpha+\gamma} +2w_0\omega c_0(1+t)^{-1} -w_0 c_1(1+t)^{-\omega} \geq 0.
\end{equation}
The assumption (\ref{omega}) on $\omega$ gives us that the above inequality is true for large times.

We follow a similar approach for the right inequality in (\ref{wass1}); we complete the square so we have:
\[
\left(v_t-\frac{Wv}{2}\right)^2-\frac{Wa-W_tc}{2c}v^2+C_0\frac{Wa}{c}v^2-\frac{W^2}{4} v^2\geq 0.
\]
We need to show
\begin{equation}\label{in2}
(4C_0-2)Wa+2W_tc-W^2c\geq 0
\end{equation}
The constant $C_0$ can be chosen sufficiently large so that we have $4C_0-2>0.$  By (\ref{abcL}) and (\ref{abg}) in order to prove (\ref{in2}) it is enough to show
\[
(4C_0-2)w_0a_0 (1+|x|)^{-\alpha+\gamma} -c_1w_0^2(1+t)^{-\omega}-2\omega w_0c_1 (1+t)^{-1} \geq 0,
\]
which is done exactly as above when proving (\ref{in3}).
\item The condition (\ref{nu}) is satisfied since $\omega<1$.
\end{itemize}
\vspace*{.2in}

Denote by $\theta =\nu-\omega$. From (\ref{weg3}) with a simple induction argument we obtain

\begin{prop}\label{prop1}
Let $a,b,c,$ and $h$ be smooth coefficients which satisfy (\ref{abcL}) and (\ref{abg}), and let $\theta>0$. Then the solution $u$ of equation (\ref{waveL}) with initial conditions (\ref{dataL}), which satisfy (\ref{cs}), satisfies the following weighted energy estimate:
\begin{eqnarray}\label{wegn}
& &\int_{T_0}^T  (1+t)^{\theta+2k}E(t;\partial^{k}_t u)\: dt \lesssim (1+T_0)^{\nu}[\sum_{i=0}^k E(T_0;\partial^{i}_t u)] \\
\nonumber& & +\int_{T_0}^T (1+t)^{\theta}E(t;u) dt+ \int_{T_0}^T\int  (1+t)^{\theta+1}a^{-1}\sum_{i=0}^k [ (1+t)^{2i}(\partial^{i}_t h)^2]\: dxdt.
\end{eqnarray}
\end{prop}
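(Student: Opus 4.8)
The plan is to argue by induction on $k$, using the one-step weighted estimate (\ref{weg3}) as the engine that carries the energy of $\partial_t^{k}u$ to that of $\partial_t^{k+1}u$. The case $k=0$ is trivial, since then both sides of (\ref{wegn}) contain the common quantity $\int_{T_0}^T(1+t)^{\theta}E(t;u)\,dt$, together with the harmless nonnegative boundary and source terms. For the base case $k=1$ I would simply invoke (\ref{weg3}) with its free exponent $\nu-\omega$ replaced by $\theta+2$; recalling $v=u_t$, this yields exactly (\ref{wegn}) at $k=1$, because the left-hand weight becomes $(1+t)^{\theta+2}$, the energy term on the right becomes $(1+t)^{\theta}E(t;u)$, and the source term becomes $(1+t)^{\theta+1}a^{-1}(h^2+(1+t)^2h_t^2)=(1+t)^{\theta+1}a^{-1}\sum_{i=0}^{1}(1+t)^{2i}(\partial_t^{i}h)^2$. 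It is essential here that the weight $W$ and the exponent $\omega$ of (\ref{omega}) depend only on the structural constants of $a,b,c$ and not on the order of differentiation, so the same construction is available at every step.

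For the inductive step, assume (\ref{wegn}) holds for a given $k$. As noted at the start of this section, $\partial_t^{k}u$ solves the same equation (\ref{waveL}) with inhomogeneity $\partial_t^{k}h$ in place of $h$, and it has the same spatial support at each time; hence the entire derivation leading to (\ref{weg3}) applies verbatim with $u$ replaced by $\partial_t^{k}u$ and $h$ by $\partial_t^{k}h$. Applying it with free exponent chosen equal to $\theta+2(k+1)$ gives
\begin{align*}
\int_{T_0}^T (1+t)^{\theta+2(k+1)}E(t;\partial_t^{k+1}u)\,dt
&\lesssim (1+T_0)^{\nu}\big[E(T_0;\partial_t^{k+1}u)+E(T_0;\partial_t^{k}u)\big] \\
&\quad +\int_{T_0}^T (1+t)^{\theta+2k}E(t;\partial_t^{k}u)\,dt \\
&\quad +\int_{T_0}^T\!\!\int (1+t)^{\theta+2k+1}a^{-1}\big((1+t)^2(\partial_t^{k+1}h)^2+(\partial_t^{k}h)^2\big)\,dxdt.
\end{align*}
The middle term on the right is precisely the left-hand side of (\ref{wegn}) at order $k$, so I would substitute the inductive hypothesis for it.

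What remains is bookkeeping of the powers of $(1+t)$, which I expect to close exactly. The boundary contributions $(1+T_0)^{\nu}[E(T_0;\partial_t^{k+1}u)+E(T_0;\partial_t^{k}u)]$ and the inductive $(1+T_0)^{\nu}\sum_{i=0}^{k}E(T_0;\partial_t^{i}u)$ combine to $(1+T_0)^{\nu}\sum_{i=0}^{k+1}E(T_0;\partial_t^{i}u)$; the term $\int_{T_0}^T(1+t)^{\theta}E(t;u)\,dt$ is reproduced unchanged; and the source terms should assemble into $\int_{T_0}^T\int(1+t)^{\theta+1}a^{-1}\sum_{i=0}^{k+1}(1+t)^{2i}(\partial_t^{i}h)^2\,dxdt$. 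The crucial checks are that the $i=k+1$ source term produced by (\ref{weg3}), carrying weight $(1+t)^{\theta+2k+3}=(1+t)^{\theta+1+2(k+1)}$, matches the top index of the desired sum, and that the extra $i=k$ source term of weight $(1+t)^{\theta+2k+1}$ coincides in power with the $i=k$ term already present in the inductive source sum, so the two merely add.

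The main obstacle is exactly this power-counting at the top two indices: one must verify that the exponent shifts introduced by the two applications of the single-derivative estimate line up so that no orphan weight is left over and the sum over $i$ is genuinely preserved. A secondary point concerns the boundary exponent: strictly, the application of (\ref{weg3}) with free exponent $\theta+2(k+1)$ produces a factor $(1+T_0)^{\theta+2(k+1)+\omega}$ rather than a fixed $(1+T_0)^{\nu}$, so either $\nu$ in (\ref{wegn}) must be read as the order-dependent quantity $\theta+2k+\omega$, or, since $T_0$ is fixed, these powers of $(1+T_0)$ may be absorbed into the implied constant. I would state this convention explicitly; with it, the induction closes and yields (\ref{wegn}) at order $k+1$.
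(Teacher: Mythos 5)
Your proof is correct and is essentially the paper's own argument: the paper obtains (\ref{wegn}) precisely by iterating the one-step estimate (\ref{weg3}) (with $\theta=\nu-\omega$) through what it calls ``a simple induction argument,'' which you have spelled out, with the power bookkeeping at the top two source indices checking out exactly as you describe. Your remark that each application of (\ref{weg3}) really produces the boundary factor $(1+T_0)^{\theta+2k+\omega}$, so that $\nu$ in (\ref{wegn}) must either be read as order-dependent or absorbed into the implied constant for fixed $T_0$, is an accurate reading of a point the paper leaves implicit.
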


The above arguments allow us to also obtain pointwise decay rates for the energy as stated in the following:
\begin{prop} Under the assumptions of Proposition \ref{prop1}, we have
\begin{eqnarray}\label{pheg}
E(T;\partial^{k}_t u) \lesssim (1+T)^{-\theta-2k-1}\left[(1+T_0)^{\nu}[\sum_{i=0}^k E(T_0;\partial^{i}_t u)] \right. \\
\nonumber \left.+\int_{T_0}^T (1+t)^{\theta}E(t;u) dt+ \int_{T_0}^T\int  (1+t)^{\theta+1}\sum_{i=0}^k\frac{  (1+t)^{2i}(\partial^{i}_t h(x,t))^2}{a(x)}dxdt\right] .
\end{eqnarray}
\end{prop}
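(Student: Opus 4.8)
The plan is to upgrade the weighted (integrated) bound of Proposition~\ref{prop1} to a pointwise-in-time estimate by exploiting the near-monotonicity of the higher order energy. Writing $v=\partial_t^k u$, which solves the damped equation with inhomogeneity $\partial_t^k h$, I would first record the energy identity
\[
\frac{d}{dt}E(t;v)+\int a v_t^2\,dx=\int (\partial_t^k h)\,v_t\,dx,
\]
and apply Young's inequality $|(\partial_t^k h)v_t|\le \tfrac12 a v_t^2+\tfrac12 a^{-1}(\partial_t^k h)^2$ to absorb the dissipative term. Dropping the nonnegative remainder $\tfrac12\int a v_t^2\,dx$ then yields $\frac{d}{dt}E(t;v)\le \tfrac12\int a^{-1}(\partial_t^k h)^2\,dx$, so that for any $T_0\le s\le T$,
\[
E(T;v)\le E(s;v)+\frac12\int_s^T\!\!\int a^{-1}(\partial_t^k h)^2\,dx\,dt.
\]
This inequality says that $E(t;v)$ can grow only through the source, which is exactly what converts an integral bound into a pointwise bound.

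Next I would average this inequality against the weight $(1+s)^{\theta+2k}$ over $s$. Multiplying by $(1+s)^{\theta+2k}$ and integrating $s$ over $[T_0,T]$, the left side produces $E(T;v)\int_{T_0}^T(1+s)^{\theta+2k}\,ds$, and for $T$ large relative to the fixed threshold $T_0$ this integral is comparable to $(1+T)^{\theta+2k+1}$ (the exponent $\theta+2k+1$ is positive since $\theta>0$). On the right side the first term is precisely $\int_{T_0}^T(1+s)^{\theta+2k}E(s;v)\,ds$, the quantity controlled by Proposition~\ref{prop1}; the source double integral is handled by Tonelli's theorem, interchanging the order of integration so that the inner $s$-integral $\int_{T_0}^t(1+s)^{\theta+2k}\,ds\lesssim (1+t)^{\theta+2k+1}$ produces the term $\int_{T_0}^T\!\int (1+t)^{\theta+2k+1}a^{-1}(\partial_t^k h)^2\,dx\,dt$.

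Dividing by $(1+T)^{\theta+2k+1}$ gives
\[
E(T;v)\lesssim (1+T)^{-\theta-2k-1}\left[\int_{T_0}^T(1+s)^{\theta+2k}E(s;v)\,ds+\int_{T_0}^T\!\int (1+t)^{\theta+2k+1}a^{-1}(\partial_t^k h)^2\,dx\,dt\right].
\]
It then remains to insert the bound of Proposition~\ref{prop1} for the first integral. Since the source term in (\ref{wegn}) already contains the summand $i=k$, namely $\int_{T_0}^T\!\int(1+t)^{\theta+1+2k}a^{-1}(\partial_t^k h)^2\,dx\,dt$, the extra source contribution from the averaging step is absorbed into the sum $\sum_{i=0}^k$, while the remaining pieces $(1+T_0)^{\nu}\sum_{i=0}^k E(T_0;\partial_t^i u)$ and $\int_{T_0}^T(1+t)^{\theta}E(t;u)\,dt$ pass through unchanged; collecting terms reproduces exactly (\ref{pheg}).

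I expect the only delicate point to be the comparability $\int_{T_0}^T(1+s)^{\theta+2k}\,ds\gtrsim (1+T)^{\theta+2k+1}$, which requires $T$ to lie a fixed multiple of $T_0$ past $T_0$; this is harmless since the estimate is asymptotic and $T_0$ is already chosen large. One can make this fully rigorous by restricting the $s$-average to the subinterval where $1+s\in[\tfrac12(1+T),\,1+T]$, on which the weight integrates cleanly to a constant multiple of $(1+T)^{\theta+2k+1}$ while the near-monotonicity bound still applies because $s\le T$.
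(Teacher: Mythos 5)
Your proposal is correct and follows essentially the same route as the paper's proof: you derive the near-monotonicity bound $E(T;\partial_t^k u)\le E(t;\partial_t^k u)+\int_t^T\!\!\int a^{-1}(\partial_s^k h)^2\,dx\,ds$ from the energy identity via Young's inequality, then average against the weight $(1+t)^{\theta+2k}$ over $[T_0,T]$ and invoke Proposition~\ref{prop1}, exactly as the paper does. Your explicit handling of the Tonelli interchange for the source term and of the lower bound $\int_{T_0}^T(1+s)^{\theta+2k}\,ds\gtrsim (1+T)^{\theta+2k+1}$ merely spells out what the paper compresses into ``simplifying the above inequality.''
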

\begin{proof} In (\ref{egid}) written for $\partial^{k}_t u$ we apply Young's inequality with appropriate coefficients such that the damping term disappears. This yields
\begin{equation}
E(T;\partial^{k}_t u) \leq E(t;\partial^{k}_t u) + \int_t^T\int \frac{[\partial^k_s h(x,s)]^2}{a(x)} dx ds , \quad 0 <t<T.
\end{equation}
Hence
\begin{eqnarray}
 &&\int_{T_0}^T (1+t)^{\theta+2k}  E(t;\partial^{k}_t u) dt \geq E(T;\partial^{k}_t u)  \int_{T_0}^T(1+t)^{\theta+2k}   dt \\
 \nonumber &&- \int_{T_0}^T (1+t)^{\theta+2k}\int_t^T\int \frac{[\partial^k_s h(x,s)]^2}{a(x)} dx ds dt.
\end{eqnarray}
Simplifying the above inequality and using (\ref{wegn}) we obtain the desired estimate.
\end{proof}

Let us go back to lower order derivatives and estimate
\begin{equation*}
\int a u_t^2 dx = \int  - c u_{tt} u_t + \text{div} (b\nabla u) u_t -h u_t dx
\end{equation*}
which by the Cauchy inequality gives
\begin{eqnarray}
&& \int a u_t^2 dx \leq  \left(\int c u_{tt}^2 dx \right)^{1/2}\left(\int u_t^2 dx\right)^{1/2} \\
&&+ \left(\int b|\nabla u|^2 dx \right)^{1/2}\left(\int b|\nabla u_t|^2 dx\right)^{1/2}
+ \frac{1}{2} \int \frac{h^2}{a} dx+\frac{1}{2}\int au_t^2 dx.
\end{eqnarray}
We obtain the following
\begin{prop} Under the assumptions of Proposition \ref{prop1}, we have
\begin{eqnarray}
&& \int a u_t^2 dx \lesssim (E(t;u)E(t;u_t))^{1/2} + \int \frac{h^2}{a} dx.
\end{eqnarray}
\end{prop}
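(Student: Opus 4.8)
The plan is to begin from the pointwise identity for $au_t$ that comes straight out of equation (\ref{waveL}), namely $au_t = h - cu_{tt} + \mathrm{div}(b\nabla u)$, multiply by $u_t$, and integrate over $\Real^n$. This produces three bilinear terms, $\int cu_{tt}u_t\,dx$, $\int \mathrm{div}(b\nabla u)u_t\,dx$, and $\int hu_t\,dx$, and the whole point is to bound each of them so that every surviving factor is recognizable as a constant multiple of $E(t;u)$ or $E(t;u_t)$ (recall $\int cu_t^2\,dx\le 2E(t;u)$, $\int b|\nabla u|^2\,dx\le 2E(t;u)$, and the analogous bounds for $u_t$).

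For the first term I would write $cu_{tt}u_t=(\sqrt{c}\,u_{tt})(\sqrt{c}\,u_t)$ and apply Cauchy--Schwarz to get $\int cu_{tt}u_t\,dx\le(\int cu_{tt}^2\,dx)^{1/2}(\int cu_t^2\,dx)^{1/2}\lesssim (E(t;u)E(t;u_t))^{1/2}$. For the second term the essential step is an integration by parts, $\int \mathrm{div}(b\nabla u)u_t\,dx=-\int b\nabla u\cdot\nabla u_t\,dx$, whose boundary contribution vanishes because $u(\cdot,t)$ has compact support (finite speed of propagation, Appendix); Cauchy--Schwarz then gives $\le(\int b|\nabla u|^2\,dx)^{1/2}(\int b|\nabla u_t|^2\,dx)^{1/2}\lesssim (E(t;u)E(t;u_t))^{1/2}$.

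The third term is handled by Young's inequality in the weighted form $|hu_t|\le \tfrac12 a^{-1}h^2+\tfrac12 au_t^2$, which deliberately peels off exactly half of the quantity $\int au_t^2\,dx$ we are trying to estimate, together with the inhomogeneous piece $\tfrac12\int a^{-1}h^2\,dx$. Collecting the three bounds yields
\[
\int au_t^2\,dx \lesssim (E(t;u)E(t;u_t))^{1/2} + \int \frac{h^2}{a}\,dx + \frac12\int au_t^2\,dx,
\]
and the proof finishes by absorbing the last term into the left-hand side.

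The computation is routine once assembled; the only genuinely delicate points are the integration by parts for the divergence term, where the compact support of the solution must be invoked to discard the boundary integral, and the bookkeeping of the weights: one must keep $c$ paired with the time derivatives and $b$ paired with the spatial gradients so that $E(t;u)$ and $E(t;u_t)$ appear with their natural coefficients rather than as unweighted $L^2$ norms.
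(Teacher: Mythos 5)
Your proof is correct and follows essentially the same route as the paper: solve the equation for $au_t$, multiply by $u_t$, apply Cauchy--Schwarz to the $cu_{tt}u_t$ term, integrate the divergence term by parts (justified by compact support) before Cauchy--Schwarz, and absorb $\tfrac12\int au_t^2\,dx$ from Young's inequality on $hu_t$. If anything, your weighting of the first term as $\left(\int cu_{tt}^2\,dx\right)^{1/2}\left(\int cu_t^2\,dx\right)^{1/2}$ is slightly cleaner than the paper's, which drops the factor $c$ in the second integral.
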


The explicit decay can be obtained by using (\ref{pheg}) with $k=0$ and $k=1$;
the decay rate of energy associated with the damping term is approximately $-1$ over
the decay rate of total energy, in agreement with Proposition \ref{propE1}
on the ``average" in time decay rates.

\section{Weighted $L^2$ estimates for spatial derivatives}

Decay estimates of higher order spatial derivatives are important for
studying nonlinear perturbations of equation (\ref{waveL}).
Such problems require more regular solutions and naturally lead to $W^{k,p}$ norms with $k\geq 0$ and
$2< p\leq\infty$. Although the latter norms are expected to decay faster than $H^k$ norms,
this is not easy to verify. The first difficulty is that $L^{p}$ norms are not convenient
to estimate by the multiplier method when $p\neq 2$. Here the standard approach
is to use interpolation or embedding, i.e., the Gagliardo-Nirenberg or Sobolev inequalities.
We thus reduce the question to multiplier estimates with losses of derivatives.
The variable coefficients present an additional difficulty as
$x$-differentiation produces terms that change the dissipative form of (\ref{waveL}).
A simple solution is to express $x$-derivatives in terms of $t$-derivatives from the equation:
\[
Mu=\frac{c}{a}u_{tt}+u_t-\frac{h}{a},
\]
where $Mu=a^{-1}\text{div} (b\nabla u).$
The diffusion phenomenon means $Mu\approx u_t$, so second-order
spatial derivatives are related with the first-order time derivative.
Similarly we can derive an identity for $M^2u$.

Below we give two sufficient conditions on $a,$ $b,$ and $c$ to guarantee
decay estimates of $Mu$ and $M^2u$, respectively. Given $\lambda_1,\lambda_2\in [0,1],$
the coefficients satisfy
\begin{eqnarray}
\label{CC1}& &\ \sup_{x\in\ {\rm supp}\: u(\cdot,t)}\
\left[\frac{c(x)}{a(x)} +\frac{b(x)}{a(x)}\left|\nabla\frac{c(x)}{a(x)}\right|^2
\right]\lesssim (1+t)^{\lambda_1},\\
\label{CC2}& &\ \sup_{x\in\ {\rm supp}\: u(\cdot,t)}\
\left[\frac{1}{a(x)}\hbox{div}\left(b(x)\nabla\frac{c(x)}{a(x)}\right)\right]^2 \lesssim (1+t)^{\lambda_2}.
\end{eqnarray}

\begin{prop}
\label{prop5}
Let $Mu=a^{-1}\text{div} (b\nabla u)$ and assume that $u$ is a solution of (\ref{waveL}).

(i) If (\ref{CC1}) holds, then
\[
a(Mu)^2\lesssim (1+t)^{\lambda_1}cu_{tt}^2+au_t^2+\frac{h^2}{a}.
\]
(ii) If (\ref{CC1}) and (\ref{CC2}) hold, then
\begin{eqnarray*}
a(M^2u)^2 & \lesssim &
(1+t)^{3\lambda_1}cu_{tttt}^2+(1+t)^{\lambda_1}\left(cu_{ttt}^2+b|\nabla u_{tt}|^2\right)\\
& & +(1+t)^{\lambda_2}au_{tt}^2\\
& & +\frac{h_t^2}{a}+(1+t)^{2\lambda_1}\frac{h_{tt}^2}{a}+a\left(M\frac{h}{a}\right)^2.
\end{eqnarray*}
\end{prop}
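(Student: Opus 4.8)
The plan is to reduce everything to the single algebraic identity obtained by solving (\ref{waveL}) for the spatial part. Writing $\text{div}(b\nabla u)=cu_{tt}+au_t-h$ and dividing by $a$ gives exactly
\[
Mu=\frac{c}{a}u_{tt}+u_t-\frac{h}{a},
\]
so part (i) is immediate: squaring this identity and using the elementary bound $(\sum_{i=1}^{3}\xi_i)^2\leq 3\sum_{i=1}^3\xi_i^2$ yields
\[
a(Mu)^2\lesssim \frac{c^2}{a}u_{tt}^2+au_t^2+\frac{h^2}{a}=\frac{c}{a}\,cu_{tt}^2+au_t^2+\frac{h^2}{a}.
\]
Since $c/a\lesssim(1+t)^{\lambda_1}$ on $\text{supp}\,u(\cdot,t)$ by the first term in (\ref{CC1}), the claim follows at once.

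For part (ii) the key observation is that $M$ involves only $x$-derivatives, so it commutes with $\partial_t$, while on a product $\phi(x)\psi(x,t)$ it obeys the Leibniz rule
\[
M(\phi\psi)=\psi\,M\phi+\phi\,M\psi+2a^{-1}b\,\nabla\phi\cdot\nabla\psi.
\]
Applying $M$ to the identity for $Mu$ with $\phi=c/a$ and $\psi=u_{tt}$, and using $Mu_{tt}=\partial_t^2(Mu)=\frac{c}{a}u_{tttt}+u_{ttt}-\frac{h_{tt}}{a}$ together with $Mu_t=\partial_t(Mu)=\frac{c}{a}u_{ttt}+u_{tt}-\frac{h_t}{a}$, I would obtain the explicit formula
\begin{align*}
M^2u&=\frac{c^2}{a^2}u_{tttt}+2\frac{c}{a}u_{ttt}+\Big(1+M\tfrac{c}{a}\Big)u_{tt}+2a^{-1}b\,\nabla\tfrac{c}{a}\cdot\nabla u_{tt}\\
&\quad-\frac{c}{a}\frac{h_{tt}}{a}-\frac{h_t}{a}-M\frac{h}{a}.
\end{align*}

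Squaring this identity, multiplying by $a$, and distributing with the elementary inequality as before produces one term per summand, each matched against the right-hand side using (\ref{CC1}) and (\ref{CC2}): the leading term gives $a\,c^4a^{-4}u_{tttt}^2=(c/a)^3\,cu_{tttt}^2\lesssim(1+t)^{3\lambda_1}cu_{tttt}^2$; the $u_{ttt}$ term gives $(c/a)\,cu_{ttt}^2\lesssim(1+t)^{\lambda_1}cu_{ttt}^2$; the gradient term gives $\tfrac{b}{a}|\nabla\tfrac{c}{a}|^2\,b|\nabla u_{tt}|^2\lesssim(1+t)^{\lambda_1}b|\nabla u_{tt}|^2$ by the second term in (\ref{CC1}); and the $u_{tt}$ term gives $(1+M\tfrac{c}{a})^2au_{tt}^2\lesssim(1+t)^{\lambda_2}au_{tt}^2$ by (\ref{CC2}), using $\lambda_2\geq0$ to absorb the constant $1$. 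The three source terms reproduce $h_t^2/a$, $(1+t)^{2\lambda_1}h_{tt}^2/a$, and $a(M\tfrac{h}{a})^2$ directly, completing the stated bound.

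The main obstacle is purely the derivation of the $M^2u$ identity: the Leibniz cross term $2a^{-1}b\,\nabla\tfrac{c}{a}\cdot\nabla u_{tt}$ is precisely why the two otherwise ad hoc quantities $\tfrac{b}{a}|\nabla\tfrac{c}{a}|^2$ in (\ref{CC1}) and $\big(a^{-1}\text{div}(b\nabla\tfrac{c}{a})\big)^2$ in (\ref{CC2}) must be controlled, and care is needed because $M$ does not simply commute past the $x$-dependent factor $c/a$. Once the identity is in hand, the remaining estimates are routine bookkeeping of powers of $c/a$ over the support of $u$.
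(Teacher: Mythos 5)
Your proposal is correct and follows essentially the same route as the paper's own proof: the same identity $Mu=\frac{c}{a}u_{tt}+u_t-\frac{h}{a}$ squared for part (i), and for part (ii) the same Leibniz rule $M(\phi\psi)=\psi M\phi+\phi M\psi+2\frac{b}{a}\nabla\phi\cdot\nabla\psi$ combined with commuting $M$ past time derivatives, yielding the identical explicit formula for $M^2u$ and the identical term-by-term matching against (\ref{CC1}) and (\ref{CC2}). Your observation that $\lambda_2\geq 0$ absorbs the constant in the factor $\bigl(1+M\frac{c}{a}\bigr)^2$ is a detail the paper leaves implicit, but the argument is otherwise the same.
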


\begin{proof}
$(i)$ The first claim follows from (\ref{CC1}) and
\[
a(Mu)^2\leq 3a\left(\frac{c^2}{a^2}u_{tt}^2+u_t^2+\frac{h^2}{a^2}\right).
\]

$(ii)$ To verify the second claim, we apply $M(uv)=uMv+vMu+2(b/a)\nabla u\cdot\nabla v.$
We have the chain of identities
\begin{eqnarray*}
M^2u & = & M\left(\frac{c}{a}u_{tt}+u_t-\frac{h}{a}\right)\\
     & = &u_{tt}M\frac{c}{a}+\frac{c}{a}Mu_{tt}+2\frac{b}{a}\nabla\frac{c}{a}\cdot\nabla u_{tt}+Mu_t-M\frac{h}{a}.
\end{eqnarray*}
Using the expression for $Mu,$ we further obtain
\begin{eqnarray*}
M^2u & = & u_{tt}M\frac{c}{a}+\frac{c}{a}\left(
\frac{c}{a}u_{tttt}+u_{ttt}-\frac{h_{tt}}{a}\right) \\
& & + 2\frac{b}{a}\nabla\frac{c}{a}\cdot\nabla u_{tt}+
\left(\frac{c}{a}u_{ttt}+u_{tt}-\frac{h_t}{a}\right)-M\frac{h}{a}\\
& = & \frac{c^2}{a^2}u_{tttt}+2\frac{c}{a}u_{ttt}+\left(M\frac{c}{a}+1\right)u_{tt}
+2\frac{b}{a}\nabla\frac{c}{a}\cdot\nabla u_{tt}\\
& & -\frac{h_t}{a}-\frac{c}{a}\frac{h_{tt}}{a}-M\frac{h}{a}.
\end{eqnarray*}
The square of $M^2u$ is bounded by the sum of squares times $7$:
\begin{eqnarray*}
a(M^2u)^2 & \leq & 7a\left[
\frac{c^4}{a^4}u_{tttt}^2+4\frac{c^2}{a^2}u_{ttt}^2+\left(M\frac{c}{a}+1\right)^2u_{tt}^2
\right]\\
& & +7a\left[4\frac{b^2}{a^2}\left|\nabla\frac{c}{a}\right|^2|\nabla u_{tt}|^2
+\frac{h_t^2}{a^2}+\frac{c^2}{a^2}\frac{h_{tt}^2}{a^2}+\left(M\frac{h}{a}\right)^2\right].
\end{eqnarray*}
Finally, we rewrite last estimate to match conditions (\ref{CC1}) and (\ref{CC2}):
\begin{eqnarray*}
a(M^2u)^2 & \lesssim &
\frac{c^3}{a^3}\cdot cu_{tttt}^2+\frac{c}{a}\cdot cu_{ttt}^2+\left[\left(M\frac{c}{a}\right)^2
+1\right]\cdot au_{tt}^2
\\
& & +\frac{b}{a}\left|\nabla\frac{c}{a}\right|^2\cdot b|\nabla u_{tt}|^2
+\frac{h_t^2}{a}+\frac{c^2}{a^2}\cdot \frac{h_{tt}^2}{a}+a\left(M\frac{h}{a}\right)^2.
\end{eqnarray*}
Thus, we have
\begin{eqnarray*}
a(M^2u)^2 & \lesssim &
(1+t)^{3\lambda_1}cu_{tttt}^2+(1+t)^{\lambda_1}\left(cu_{ttt}^2+b|\nabla u_{tt}|^2\right)
\\
& & +(1+t)^{\lambda_2}au_{tt}^2
+\frac{h_t^2}{a}+(1+t)^{2\lambda_1}\frac{h_{tt}^2}{a}+a\left(M\frac{h}{a}\right)^2.
\end{eqnarray*}

\end{proof}

It is possible to derive expressions for $M^ku,$ $k\geq 3,$ and find conditions on $a,$ $b,$ and $c$
which yield decay estimates of higher order spatial derivatives. However, considering $k\leq 2$ is sufficient for most
applications in $\Real^n$ when $n\leq 3.$
\begin{prop}
Let $a,b,c,$ and $h$ be sufficiently smooth functions which satisfy (\ref{abcL}) and (\ref{abg}).
Define
\[
Mu=\frac{\text{div}(b\nabla u)}{a}.
\]
Then the following weighted estimates hold for the solution $u$ of equation (\ref{waveL}) with initial conditions (\ref{dataL}) which satisfy (\ref{cs}):
\begin{eqnarray*}
(i) &   &  \int_{T_0}^T\int (1+t)^{\theta+1} a(Mu)^2\: dx dt  \\
& \lesssim & (1+T_0)^{\theta+1}[\sum_{i=0}^1 E(T_0;\partial^{i}_t u)] +\int_{T_0}^T (1+t)^{\theta}E(t;u)\: dt\\
&    & + \int_{T_0}^T\int  (1+t)^{\theta+1}\sum_{i=0}^1\frac{ (1+t)^{2i}(\partial^{i}_t h)^2}{a}\: dxdt, \\
(ii)&   & \int_{T_0}^T\int (1+t)^{\theta+3-\lambda_2} a(M^2u)^2\: dx dt \\
 &\lesssim & (1+T_0)^{\theta+3}[\sum_{i=0}^3 E(T_0;\partial^{i}_t u)]
+\int_{T_0}^T (1+t)^{\theta}E(t;u) dt\\
& & + \int_{T_0}^T\int  (1+t)^{\theta+1}\sum_{i=0}^2 \frac{(1+t)^{2i}(\partial^{i}_t h)^2}{a}\: dxdt\\
& & +\int_{T_0}^T\int  (1+t)^{\theta+3-\lambda_2}\left[ \frac{h_t^2}{a}+(1+t)^{2\lambda_1}
\frac{h^2_{tt}}{a}+a\left(M\frac{h}{a}\right)^2\right]\: dxdt.
\end{eqnarray*}

\end{prop}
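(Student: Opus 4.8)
The plan is to combine the pointwise (in space) algebraic bounds of Proposition \ref{prop5} with the integrated weighted energy estimate \eqref{wegn} of Proposition \ref{prop1} and the weighted damping estimate of Proposition \ref{propE1}. In each part I would take the relevant bound from Proposition \ref{prop5}, multiply it by the indicated power of $(1+t)$, and integrate over $[T_0,T]\times\Real^n$. Every resulting term is then of one of three types: a weighted energy integral $\int_{T_0}^T (1+t)^p E(t;\partial_t^k u)\,dt$, controlled by \eqref{wegn} applied with a shifted value of $\theta$; a weighted damping integral $\int_{T_0}^T\int (1+t)^q a(\partial_t^k u)^2\,dx\,dt$, controlled by Proposition \ref{propE1} applied to $\partial_t^k u$ (which solves \eqref{waveL} with source $\partial_t^k h$); or a source term, collected directly. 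The whole point of the weight exponents $\theta+1$ and $\theta+3-\lambda_2$ is to make these three reductions align with the target right-hand sides.

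For part (i) I would start from $a(Mu)^2\lesssim (1+t)^{\lambda_1}cu_{tt}^2+au_t^2+h^2/a$, multiply by $(1+t)^{\theta+1}$, and integrate. The damping term yields $\int_{T_0}^T\int (1+t)^{\theta+1}au_t^2\,dx\,dt$, which is exactly Proposition \ref{propE1} with $\mu=\theta+1$; its three outputs are precisely the boundary term $(1+T_0)^{\theta+1}E(T_0;u)$, the integral $\int_{T_0}^T(1+t)^\theta E(t;u)\,dt$, and the source term $\int_{T_0}^T\int(1+t)^{\theta+1}h^2/a\,dx\,dt$, i.e. the stated right-hand side with $i=0$. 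The term $(1+t)^{\theta+1+\lambda_1}cu_{tt}^2$ is bounded using $\int cu_{tt}^2\,dx\le 2E(t;u_t)$ and then \eqref{wegn} with $k=1$ and $\theta$ replaced by $\theta+\lambda_1-1$; since $\lambda_1\le 1$ the residual energy integral has weight $(1+t)^{\theta+\lambda_1-1}\le (1+t)^\theta$, so it is absorbed, and the boundary and source terms produced are the $i=1$ contributions, which fall under the stated bounds once $T_0$ is large.

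For part (ii) I would argue identically, now starting from the six-term bound for $a(M^2u)^2$ in Proposition \ref{prop5}(ii) and multiplying by $(1+t)^{\theta+3-\lambda_2}$. The shift by $-\lambda_2$ is chosen so that the damping contribution $(1+t)^{\lambda_2}au_{tt}^2$ acquires weight $(1+t)^{\theta+3}$, handled by Proposition \ref{propE1} with $\mu=\theta+3$ applied to $u_{tt}$. The leading term $(1+t)^{3\lambda_1}cu_{tttt}^2$ becomes $\int_{T_0}^T(1+t)^{\theta+3-\lambda_2+3\lambda_1}E(t;u_{ttt})\,dt$ after $\int cu_{tttt}^2\,dx\le 2E(t;u_{ttt})$, controlled by \eqref{wegn} with $k=3$; the term $(1+t)^{\lambda_1}(cu_{ttt}^2+b|\nabla u_{tt}|^2)$ gives $\int_{T_0}^T(1+t)^{\theta+3-\lambda_2+\lambda_1}E(t;u_{tt})\,dt$, controlled by \eqref{wegn} with $k=2$; and the remaining three terms $h_t^2/a$, $(1+t)^{2\lambda_1}h_{tt}^2/a$, $a(Mh/a)^2$ are collected into the last two integrals of the statement.

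The main obstacle is the exponent bookkeeping needed to guarantee that every residual energy integral produced by the various applications of \eqref{wegn} is dominated by $\int_{T_0}^T(1+t)^\theta E(t;u)\,dt$, and that every boundary term is dominated by $(1+T_0)^{\theta+3}\sum_{i=0}^3 E(T_0;\partial_t^i u)$. Concretely, the $k=3$ application requires its effective parameter $\theta-3-\lambda_2+3\lambda_1$ to satisfy $\theta-3-\lambda_2+3\lambda_1\le\theta$, i.e. $3\lambda_1\le 3+\lambda_2$, and the $k=2$ application requires $\lambda_1\le 1+\lambda_2$; both hold because $\lambda_1,\lambda_2\in[0,1]$. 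One must also check that the exponents $\nu$ appearing in the boundary terms of each application of \eqref{wegn} stay below $\theta+3$, which follows from $0<\omega<1$ together with $\lambda_1,\lambda_2\in[0,1]$ once $T_0$ is taken large enough, exactly as in the choice of $T_0$ preceding Proposition \ref{prop1}.
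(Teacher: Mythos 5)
Your proposal is correct and takes essentially the same route as the paper's proof: integrate the pointwise bounds of Proposition \ref{prop5} against the stated weights, control $\int(1+t)^{\theta+1}au_t^2\,dxdt$ and $\int(1+t)^{\theta+3}au_{tt}^2\,dxdt$ via Proposition \ref{propE1} (applied to $u$ and to $u_t$, respectively), handle the remaining weighted energy integrals by \eqref{wegn} with $k=1,2,3$, and use $\lambda_1,\lambda_2\in[0,1]$ for the exponent bookkeeping. Two cosmetic points: the damping bound for $au_{tt}^2$ comes from Proposition \ref{propE1} applied to $u_t$, not to $u_{tt}$ (the latter would produce $au_{ttt}^2$); and instead of running \eqref{wegn} with the shifted parameters $\theta+\lambda_1-1$ or $\theta-3-\lambda_2+3\lambda_1$, which can fall outside the stated hypothesis $\theta>0$ of Proposition \ref{prop1}, it is safer to first majorize the weight using $\lambda_1,\lambda_2\leq 1$, e.g.\ $(1+t)^{\theta+3-\lambda_2+3\lambda_1}\leq(1+t)^{\theta+6}$, and then apply \eqref{wegn} with the original $\theta$, which is exactly how the paper proceeds.
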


\begin{proof}
$(i)$ Recall the estimate of $u$ in Proposition~\ref{propE1} where $\mu$ is replaced by $\theta+1$:
\begin{eqnarray*}
\int_{T_0}^T  \int (1+t)^{\theta+1} a(x) u_t^2\: dxdt & \lesssim & (1+T_0)^{\theta+1}E(T_0;u)+\int_{T_0}^T (1+t)^{\theta}E(t;u) dt\\
 & & +\int_{T_0}^{T}\int (1+t)^{\theta+1} \frac{h^2(x,t)}{a(x)}\: dxdt.
\end{eqnarray*}
We also need Proposition~\ref{prop1} with $k=1$:
\begin{eqnarray*}
\int_{T_0}^T  (1+t)^{\theta+2}E(t;u_t)\: dt &\lesssim &(1+T_0)^{\nu}[\sum_{i=0}^1 E(T_0;\partial^{i}_t u)]
+\int_{T_0}^T (1+t)^{\theta}E(t;u) dt\\
& & + \int_{T_0}^T\int  (1+t)^{\theta+1}\sum_{i=0}^1 \frac{(1+t)^{2i}(\partial^{i}_t h)^2}{a}\: dxdt.
\end{eqnarray*}
Since $\lambda_1\leq 1,$ the claim follows from Proposition~\ref{prop5}$(i)$.

$(ii)$ A simple adaptation of Proposition~\ref{propE1} to $u_t$ yields
\begin{eqnarray*}
\int_{T_0}^T  \int (1+t)^{\theta+3} au_{tt}^2\: dxdt & \lesssim &
 (1+T_0)^{\theta+3}E(T_0;u_t)+\int_{T_0}^T (1+t)^{\theta+2}E(t;u_t) dt\\
 & & +\int_{T_0}^{T}\int (1+t)^{\theta+3} \frac{h^2_t}{a}\: dxdt.
\end{eqnarray*}
Applying Proposition~\ref{prop1} to the integral of $E(t;u_t),$ we obtain
\begin{eqnarray*}
\int_{T_0}^T  \int (1+t)^{\theta+3} au_{tt}^2\: dxdt & \lesssim &
 (1+T_0)^{\nu}[\sum_{i=0}^1 E(T_0;\partial^{i}_t u)]
+\int_{T_0}^T (1+t)^{\theta}E(t;u) dt\\
& & + \int_{T_0}^T\int  (1+t)^{\theta+1}\sum_{i=0}^1 \frac{(1+t)^{2i}(\partial^{i}_t h)^2}{a}\: dxdt.
\end{eqnarray*}
It is clear from Proposition~\ref{prop5}$(ii)$ and $\lambda_1,\lambda_2\leq 1$ that $au_{tt}^2$ is the main term
in the upper bound of $M^2u$. We readily estimate the remaining terms by
Proposition~\ref{prop1} with $k=2,3.$
\end{proof}

\begin{rem}
It is straightforward to establish pointwise estimates of $\|a^{1/2}Mu\|_{L^2}$ and $\|a^{1/2}M^2u\|_{L^2}$
for large $t.$ Such results will be presented elsewhere.
\end{rem}

\section{$L^\infty$ estimates in dimension $n=3$}

The most important applications of Propositions 3.4 and 4.2 concern $L^\infty$ decay estimates.
We give an example in $n=3,$ although we can treat all $n\leq 7$ due to the embedding $H^4({\bf R}^n)\subset
L^\infty({\bf R}^n)$ for $4>n/2.$ It is important to mention that
the standard Sobolev estimate
$\|u||_{L^\infty}\lesssim (\|u||_{L^2}+\|\Delta u||_{L^2})$ is
not suitable, since the $L^\infty$ norm is expected to decay faster than the $L^2$ norm.

\begin{prop}
\label{linfty}
Assume that $n=3$ and $a,$ $b$ are $C^1$-functions. If
\begin{eqnarray*}
 \|(\nabla \ln b) u\|_{L^2} & \lesssim &\|b^{1/2}\nabla u\|_{L^2},\\
 \|\nabla(a^{1/2}b^{-1}u)\|_{L^2} & \lesssim &\|b^{1/2}\nabla u\|_{L^2},\\
 \|\Delta u\|_{L^2} & \lesssim &\|a^{-1/2}\text{div}(b\nabla u)\|_{L^2},
\end{eqnarray*}
for every sufficiently regular $u,$ then
\[
\|u\|_{L^\infty}^2 \lesssim \|a^{-1/2}\text{div}(b\nabla u)\|_{L^2}\|b^{1/2}\nabla u\|_{L^2}.
\]
Hence $\|u\|_{L^\infty}$ is bounded in terms of $E(t;u)$ and $E(t;u_t)$ whenever
$u$ is a solution of problem (\ref{waveL}), (\ref{dataL}).
\end{prop}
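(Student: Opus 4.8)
The plan is to reduce the weighted estimate to a single scale-invariant inequality on $\Real^3$ and then to spend the three hypotheses converting plain derivatives into the weighted quantities that appear on the right-hand side. The tool I would establish first is the homogeneous Agmon-type inequality
\[
\|f\|_{L^\infty}^2\lesssim\|\nabla f\|_{L^2}\|\Delta f\|_{L^2},\qquad f\in H^2(\Real^3),
\]
which carries no zeroth-order term, in keeping with the authors' remark that $\|u\|_{L^\infty}$ is expected to decay faster than $\|u\|_{L^2}$. I would prove it by frequency splitting: starting from $|f(x)|\lesssim\int|\widehat f(\xi)|\,d\xi$, split the integral at $|\xi|=\rho$ and apply Cauchy--Schwarz to each piece, using that in dimension three both $\int_{|\xi|<\rho}|\xi|^{-2}\,d\xi\sim\rho$ and $\int_{|\xi|>\rho}|\xi|^{-4}\,d\xi\sim\rho^{-1}$ converge. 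This yields $\|f\|_{L^\infty}\lesssim\rho^{1/2}\|\nabla f\|_{L^2}+\rho^{-1/2}\|\Delta f\|_{L^2}$, and optimizing in $\rho$ gives the claim.

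Next I would apply this inequality with $f=u$, so that the left-hand side is already exactly $\|u\|_{L^\infty}^2$ and the task collapses to replacing the two unweighted $L^2$ factors by the weighted ones in the conclusion. The second-order factor is immediate from the third hypothesis, $\|\Delta u\|_{L^2}\lesssim\|a^{-1/2}\text{div}(b\nabla u)\|_{L^2}$, which is precisely the first factor of the target. For the first-order factor I would pass from $\|\nabla u\|_{L^2}$ to $\|b^{1/2}\nabla u\|_{L^2}$ using hypotheses one and two. The natural route is the product rule $\nabla(a^{1/2}b^{-1}u)=a^{1/2}b^{-1}\nabla u+u\,\nabla(a^{1/2}b^{-1})$: solving for $\nabla u$ exhibits it as a term built from $\nabla(a^{1/2}b^{-1}u)$, which hypothesis two controls by $\|b^{1/2}\nabla u\|_{L^2}$, plus a lower-order term proportional to $(\tfrac12\nabla\ln a-\nabla\ln b)u$, which under (\ref{abcL}) is of size $O((1+|x|)^{-1})\,|u|$ and is therefore dominated by $\|(\nabla\ln b)u\|_{L^2}$, controlled by hypothesis one.

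Assembling the three conversions yields $\|u\|_{L^\infty}^2\lesssim\|a^{-1/2}\text{div}(b\nabla u)\|_{L^2}\|b^{1/2}\nabla u\|_{L^2}$, and the closing sentence of the statement then follows by substituting $\text{div}(b\nabla u)=au_t+cu_{tt}-h$ from the equation: $\|a^{-1/2}\text{div}(b\nabla u)\|_{L^2}$ is bounded through $E(t;u)^{1/2}$ and $E(t;u_t)^{1/2}$, while $\|b^{1/2}\nabla u\|_{L^2}^2\le 2E(t;u)$. The step I expect to be the main obstacle is the first-order conversion, and the delicate point there is uniformity of the implied constants in $t$: the support of $u$ expands like $t^{2/(2-\beta-\gamma)}$, so the weights $b^{\pm1/2}$ and $a^{1/2}b^{-1}$ cannot be estimated pointwise without losing powers of $t$. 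Hypotheses one and two are engineered exactly to sidestep this by absorbing every weighted contribution directly into $\|b^{1/2}\nabla u\|_{L^2}$, and checking that the product-rule bookkeeping closes with no residual weight is where the real work of the proof lies.
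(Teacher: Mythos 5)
Your Agmon inequality $\|u\|_{L^\infty}^2\lesssim\|\nabla u\|_{L^2}\|\Delta u\|_{L^2}$ and its frequency-splitting proof are fine, and the second-order factor is indeed handled by the third hypothesis. The genuine gap is the first-order conversion $\|\nabla u\|_{L^2}\lesssim\|b^{1/2}\nabla u\|_{L^2}$, which you correctly flag as the crux but whose proposed derivation does not close. Solving the product rule for $\nabla u$ gives
\[
\nabla u \;=\; a^{-1/2}b\,\nabla\bigl(a^{1/2}b^{-1}u\bigr)\;-\;\Bigl(\tfrac12\nabla\ln a-\nabla\ln b\Bigr)u,
\]
so the term you want hypothesis two to control carries the multiplier $a^{-1/2}b$, which under (\ref{abcL}) behaves like $(1+|x|)^{\alpha/2+\beta}$ and is in general unbounded on the expanding support of $u$; hypothesis two bounds only the \emph{unweighted} norm $\|\nabla(a^{1/2}b^{-1}u)\|_{L^2}$ and cannot absorb this factor. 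The lower-order term fails as well: $\|(\tfrac12\nabla\ln a-\nabla\ln b)u\|_{L^2}$ is not dominated by $\|(\nabla\ln b)u\|_{L^2}$ in general (take $b$ constant, so $\nabla\ln b\equiv 0$ while $\nabla\ln a\not\equiv 0$). Thus $\|\nabla u\|_{L^2}\lesssim\|b^{1/2}\nabla u\|_{L^2}$ is not a consequence of the stated hypotheses; it holds trivially when $1\lesssim b$, as assumed in Corollary~\ref{delta}, but the proposition is stated without that assumption --- and note that under $1\lesssim b$ your route would never use hypotheses one and two at all, a strong hint that the intended argument is structured differently.

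The paper's proof avoids unweighted gradient norms entirely. Setting $f=a^{-1/2}\mathrm{div}(b\nabla u)$ and multiplying the equation by $u$ yields
\[
-\Delta u^2=-2|\nabla u|^2-2\frac{a^{1/2}}{b}fu+2\frac{\nabla b}{b}\cdot u\nabla u,
\]
which is fed into the Newtonian potential representation $u^2(x)=\frac{1}{4\pi}\int|x-y|^{-1}\bigl(-\Delta u^2(y)\bigr)\,dy$, valid since $u$ is compactly supported (finite propagation speed). The $-2|\nabla u|^2$ term has a favorable sign and is discarded; the two remaining integrals are estimated by Cauchy--Schwarz together with Hardy's inequality $\int|x-y|^{-2}g^2(y)\,dy\leq 4\int|\nabla g(y)|^2\,dy$, applied once with $g=a^{1/2}b^{-1}u$ (where hypothesis two converts the result into $\|b^{1/2}\nabla u\|_{L^2}$) and once with $g=\nabla u$ (producing $\|\Delta u\|_{L^2}$, converted by hypothesis three, with hypothesis one handling the factor $\|(\nabla\ln b)u\|_{L^2}$). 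This is precisely the mechanism by which the weights are absorbed without ever estimating $\|\nabla u\|_{L^2}$ itself. To salvage your outline you must either add the hypothesis $1\lesssim b$ (yielding a correct but strictly weaker statement) or replace your product-rule conversion by this potential-plus-Hardy argument.
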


\begin{proof} We set
\[
a^{-1/2}\text{div}(b\nabla u)=f
\]
and multiply with $u$ to obtain
\[
-\Delta u^2=-2|\nabla u|^2-2\frac{a^{1/2}}{b}fu+2\frac{\nabla b}{b}\cdot u\nabla u.
\]
The well-known formula
\[
u^2(x)=\frac{1}{4\pi} \int \frac{-\Delta u^2(y)}{|x-y|}\: dy,
\]
which is valid for compactly supported $u$, yields the estimate
\begin{eqnarray}
\label{twoterms}
u^2(x) & \lesssim & \int \frac{|a^{1/2}(y)f(y)u(y)|}{b(y)|x-y|}\: dy
                   +\int \frac{|u(y)| |\nabla \ln b(y)\cdot \nabla u(y)|}{|x-y|}\: dy.
\end{eqnarray}
It follows from the Cauchy inequality that
\begin{eqnarray*}
\int \frac{|a^{1/2}(y)f(y)u(y)|}{b(y)|x-y|} dy & \leq & \|f\|_{L^2}
                             \left(\int a(y)b^{-2}(y)|x-y|^{-2}u^2(y) dy\right)^{1/2}\\
                          & \lesssim & \|a^{-1/2}\text{div}(b\nabla u)\|_{L^2}
                             \left(\int |\nabla(a^{1/2}(y)b^{-1}(y)u(y))|^2 dy\right)^{1/2},
\end{eqnarray*}
where the second factor comes from the Hardy inequality
\[
\int |x-y|^{-2}f^2(y)\: dy \leq 4\int |\nabla f(y)|^2\: dy,\quad f\in H^1({\bf R}^3).
\]
The assumptions on $a$ and $b$ give
\[
\int |\nabla(a^{1/2}(y)b^{-1}(y)u(y))|^2\: dy \lesssim \int b(y)|\nabla u(y)|^2\: dy,
\]
so the final estimate becomes
\begin{equation}
\label{est1}
\int \frac{|f(y)u(y)|}{b(y)|x-y|}\: dy  \lesssim  \|a^{-1/2}\text{div}(b\nabla u)\|_{L^2}
                             \|b^{1/2}\nabla u\|_{L^2}.
\end{equation}

The second integral in (\ref{twoterms}) admits similar estimates.
From the Cauchy and Hardy inequalities, we have
\begin{eqnarray}
\nonumber
  \int \frac{|u(y)||\nabla\ln b\cdot \nabla u(y)|}{|x-y|}\: dy & \lesssim &
  \|(\nabla \ln b) u\|_{L^2}\|\Delta u\|_{L^2}\\
\label{est2}
  & \lesssim & \|b^{1/2} \nabla u\|_{L^2} \|a^{-1/2}\text{div}(b\nabla u)\|_{L^2}.
\end{eqnarray}

Adding estimates (\ref{est1}) and (\ref{est2}) completes the proof.
\end{proof}

It is easy to find more explicit conditions on $a$ and $b$ which will
allow us to use Proposition~5.1. This part is about integration by parts and Hardy's inequality.

\begin{cor}
\label{delta}
Assume that $a\in C^1$ and $b\in C^2$ are such that
\begin{eqnarray*}
(i)& & a(x)\lesssim 1,\quad 1\lesssim b(x),\\
(ii)& & |\nabla(a^{1/2}(x)b^{-1}(x))|+|\nabla \ln b(x)| \lesssim (1+|x|)^{-1},
\end{eqnarray*}
and the matrix with entries
$$(iii)\qquad \frac{\delta_{ij}}{2}\Delta b-b_{x_ix_j},\quad i,j=1,\ldots, n$$
is non-negative definite. Then the conditions on $a$ and $b$ in Proposition~\ref{linfty} hold;
hence the $L^\infty$ estimate also holds.

\end{cor}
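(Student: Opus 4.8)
The goal is to verify that conditions $(i)$–$(iii)$ imply the three hypotheses of Proposition~\ref{linfty}. The plan is to address each of the three required inequalities in turn. The first two are direct consequences of the pointwise bounds $(i)$ and $(ii)$, while the third—the Calder\'on--Zygmund-type estimate $\|\Delta u\|_{L^2}\lesssim\|a^{-1/2}\mathrm{div}(b\nabla u)\|_{L^2}$—is where the curvature condition $(iii)$ on the Hessian of $b$ enters, and I expect this to be the main obstacle.

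I would begin with the two easy hypotheses. For $\|(\nabla\ln b)u\|_{L^2}\lesssim\|b^{1/2}\nabla u\|_{L^2}$, condition $(ii)$ gives $|\nabla\ln b(x)|\lesssim(1+|x|)^{-1}$, so $\|(\nabla\ln b)u\|_{L^2}\lesssim\||x|^{-1}u\|_{L^2}\lesssim\|\nabla u\|_{L^2}$ by Hardy's inequality in $\Real^3$, and then $\|\nabla u\|_{L^2}\lesssim\|b^{1/2}\nabla u\|_{L^2}$ because $1\lesssim b$ from $(i)$. The same pairing of $(ii)$ with Hardy's inequality handles $\|\nabla(a^{1/2}b^{-1}u)\|_{L^2}\lesssim\|b^{1/2}\nabla u\|_{L^2}$: I would expand $\nabla(a^{1/2}b^{-1}u)=(\nabla(a^{1/2}b^{-1}))u+a^{1/2}b^{-1}\nabla u$, bound the first term by $\||x|^{-1}u\|_{L^2}\lesssim\|\nabla u\|_{L^2}$ using $(ii)$, and bound the second by $\|\nabla u\|_{L^2}$ using $a\lesssim1\lesssim b$ from $(i)$; both reduce to $\|b^{1/2}\nabla u\|_{L^2}$ as before.

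The crux is the third inequality. The natural approach is to write $\mathrm{div}(b\nabla u)=b\Delta u+\nabla b\cdot\nabla u$, so that $a^{-1/2}\mathrm{div}(b\nabla u)=a^{-1/2}b\Delta u+a^{-1/2}\nabla b\cdot\nabla u$; since $a\lesssim1\lesssim b$, controlling $\|\Delta u\|_{L^2}$ this way would require absorbing the first-order term $\nabla b\cdot\nabla u$, which cannot be dominated pointwise. Instead I would compute $\|\mathrm{div}(b\nabla u)\|_{L^2}^2$ by expanding the square and integrating by parts, aiming to produce $\int b^2(\Delta u)^2$ together with a remainder. The key integration by parts moves two derivatives and generates a term of the form $\int(\tfrac{1}{2}\delta_{ij}\Delta b-b_{x_ix_j})\,u_{x_i}u_{x_j}$ (summed over $i,j$); this is precisely the quadratic form whose non-negativity is assumed in $(iii)$, so it has a favorable sign and can be discarded. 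After incorporating the lower bound $1\lesssim b$ and the upper bound $a\lesssim1$, this should yield $\int(\Delta u)^2\lesssim\int b^2(\Delta u)^2\lesssim\int a^{-1}(\mathrm{div}(b\nabla u))^2$, which is the desired estimate.

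The main obstacle is carrying out the integration by parts in the third step cleanly so that exactly the Hessian form $\tfrac12\delta_{ij}\Delta b-b_{x_ix_j}$ appears with the correct sign and all boundary terms vanish (justified by the compact support of $u$ or by density of smooth compactly supported functions). Once the algebra is arranged so that condition $(iii)$ supplies the good sign and conditions $(i)$–$(ii)$ supply the remaining bounds, the three hypotheses of Proposition~\ref{linfty} are verified and the stated $L^\infty$ estimate follows immediately.
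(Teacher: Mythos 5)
Your proposal is correct, and the first two hypotheses are verified exactly as in the paper: condition $(ii)$ plus Hardy's inequality controls $\|(\nabla \ln b)u\|_{L^2}$ and the commutator term in $\nabla(a^{1/2}b^{-1}u)$, and $(i)$ upgrades $\|\nabla u\|_{L^2}$ to $\|b^{1/2}\nabla u\|_{L^2}$. For the third hypothesis your route differs from the paper's in the bookkeeping: the paper does not square anything, but instead sets $f=a^{-1/2}\mathrm{div}(b\nabla u)$ and multiplies the equation by $\Delta u$, obtaining $\int b(\Delta u)^2\,dx=\int a^{1/2}f\Delta u\,dx-\int(\nabla b\cdot\nabla u)\Delta u\,dx$; the single integration by parts $\int(\nabla b\cdot\nabla u)\Delta u\,dx=\tfrac12\int\Delta b\,|\nabla u|^2\,dx-\sum_{i,j}\int b_{x_ix_j}u_{x_i}u_{x_j}\,dx$ exposes exactly the unweighted form of $(iii)$, which is dropped by sign, and the proof closes with Cauchy's inequality and $a\lesssim 1\lesssim b$, dividing by $\|\Delta u\|_{L^2}$. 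Your expansion of $\|\mathrm{div}(b\nabla u)\|_{L^2}^2$ also works, but be aware that the cross term carries an extra factor of $b$: integrating $2\int b\,\Delta u\,(\nabla b\cdot\nabla u)\,dx$ by parts yields
\[
-2\int(\nabla b\cdot\nabla u)^2\,dx+\int|\nabla b|^2|\nabla u|^2\,dx
+2\int b\left(\frac{\delta_{ij}}{2}\Delta b-b_{x_ix_j}\right)u_{x_i}u_{x_j}\,dx,
\]
so the Hessian form appears weighted by $b$ (harmless, since $b>0$ and $(iii)$ is pointwise) together with the pair $\int|\nabla b|^2|\nabla u|^2-\int(\nabla b\cdot\nabla u)^2\geq 0$ by pointwise Cauchy--Schwarz; hence $\|\mathrm{div}(b\nabla u)\|_{L^2}^2\geq\int b^2(\Delta u)^2\,dx\gtrsim\|\Delta u\|_{L^2}^2$, and $a\lesssim 1$ converts this to the $a^{-1/2}$-weighted norm. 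What each approach buys: the paper's pairing is linear in $f$, so only one integration by parts and one Cauchy--Schwarz are needed, but it ends with an absorption step (dividing by $\|\Delta u\|_{L^2}$, which tacitly uses its finiteness); your version produces the estimate directly as a sum of terms with definite signs, at the price of tracking the two extra terms above. Both arguments hinge on the same integration-by-parts identity and on the sign supplied by $(iii)$, with boundary terms vanishing by compact support, so your plan completes without difficulty.
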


\begin{proof} Conditions $(i),$ $(ii),$ and Hardy's inequality readily show that
\[
\|(\nabla \ln b) u\|_{L^2} \lesssim \|b^{1/2}\nabla u\|_{L^2},\quad
 \|\nabla(a^{1/2}b^{-1}u)\|_{L^2} \lesssim \|b^{1/2}\nabla u\|_{L^2}
 \]
for sufficiently regular $u.$ Thus, it remains to verify the condition about $\|\Delta u\|_{L^2}.$

Let $a^{-1/2}\text{div}(b\nabla u)=f$ and multiply this equation with $\Delta u.$ Then
\[
\int b(\Delta u)^2\: dx =\int a^{1/2}f\Delta u\: dx-\int (\nabla b\cdot \nabla u)\Delta u\: dx.
\]
Integrating by parts in the second term on the right side, we obtain
\[
\int (\nabla b\cdot \nabla u)\Delta u\: dx =\frac{1}{2}\int \Delta b|\nabla u|^2\: dx-
\sum_{i,j=1}^n \int b_{x_ix_j}u_{x_i}u_{x_j}\: dx.
\]
Thus,
\begin{eqnarray*}
\int b(\Delta u)^2\: dx  &= & \int a^{1/2}f\Delta u\: dx - \frac{1}{2}\int \Delta b|\nabla u|^2\: dx\\
& & +\sum_{i,j=1}^n \int b_{x_ix_j}u_{x_i}u_{x_j}\: dx.
\end{eqnarray*}
We have from assumption $(iii)$ that
\[
\int b(\Delta u)^2\: dx \leq  \int a^{1/2}f\Delta u\: dx.
\]
Now assumption $(i)$ and Cauchy's inequality imply
\[
\int (\Delta u)^2\: dx \lesssim \int b(\Delta u)^2\: dx
\lesssim \|f\|_{L^2}\|\Delta u\|_{L^2},
\]
so $\|\Delta u\|_{L^2}\lesssim \|a^{-1/2}\text{div}(b\nabla u)\|_{L^2}.$
\end{proof}

\section{Application in the homogeneous case $c=1, h=0$}

In this section we will derive explicit decay estimates for (\ref{waveL})
when $c(x)=1$ and $h(x,t)=0$, i.e. for
\begin{equation}\label{waveL1}
 u_{tt}-\hbox{div}(b(x)\nabla u)+a(x)u_t =0,
 \quad x\in {\bf R}^n,\ \ t>0,
\end{equation}
as an application of the results proven here and in \cite{RTY}.
To state these results we introduce the following:

\textbf{Hypothesis A.} Let $a$ and $b$ satisfy the growth conditions listed in (\ref{abcL}). Then there exists a subsolution $A(x)$ which satisfies
\begin{equation}\label{E}
\text{div}(b(x) \nabla A(x))\geq a(x),\quad x\in {\bf R}^n,
\end{equation}
and has the following properties:
\begin{eqnarray}\label{E:div}
 (a1) & & A(x)\geq 0 \ \ \hbox{for all} \ \ x,\\
 (a2) & & A(x)=O(|x|^{2-\alpha-\beta})\ \ \hbox{for large} \ \ |x|,\\
 (a3) & & \mu: =\liminf_{x\rightarrow\infty}\frac{a(x)A(x)}{b(x)|\nabla A(x)|^2}\ >\ 0.
\end{eqnarray}

As we will see below in Theorem \ref{mainthm}, the quantity $\mu$ gives the decay of the weighted $L^2$ norm of $u$, while $\mu + 1$ gives the decay of the energy of $u$. In several cases one can construct explicit subsolutions $A$ which satisfy (a1)-(a3), so in these cases one can compute the value of $\mu$ as given by the formula
\[
\mu=\frac{2-\alpha}{2-\alpha-\beta}.
\]
These cases are:
\begin{enumerate}
\item $a,b$ are radial functions.
\item $a,b$ are ``separable" with the same angular component, i.e. for $ r=|x|$ and $ \omega=\frac{x}{r}\in S^{n-1}$, where $S$ is the unit sphere in $\Real^n$, there exist functions $a_1,$ $ b_1,$ and $\zeta$ such that
\begin{align*}
a(x)&=a(r,\omega)=a_1(r) \zeta(\omega),\\
 b(x)&=b(r,\omega)=b_1(r) \zeta(\omega).
\end{align*}
\item $b$ is radial and $a$ is arbitrary.
\item $a,b$ are arbitrary (no radial symmetry), $\alpha+\beta <2$ and
\[
\nabla b(x) \cdot \frac{x}{|x|} \geq b_{0}\beta(1+|x|)^{\beta-1}.
\]
\end{enumerate}
For a full discussion of these special situations see Section 7 in \cite{RTY}.

 \begin{thm}\label{mainthm}
Assume that $a$ and $b$ satisfy (\ref{abcL}) with $\alpha,$ $ \beta$ such that
\begin{equation}\label{c1}
\alpha < 1, \quad 0\leq \beta <2, \quad 2\alpha+\beta\leq 2.
\end{equation}
Also, assume that Hypothesis A holds. Then for every $\delta>0$ the solution of (\ref{waveL1}) satisfies
\begin{align*}
 &\int e^{(\mu-\delta)\frac{A(x)}{t}}a(x)u^2\ dx  \leq  C_\delta(\|\nabla
 u_0\|_{L^2}^2+\|u_1\|_{L^2}^2)t^{\delta-\mu},\\
 &\int e^{(\mu-\delta)\frac{A(x)}{t}}(u_t^2+b(x)|\nabla u|^2)\ dx \leq
 C_\delta(\|\nabla u_0\|_{L^2}^2+\|u_1\|_{L^2}^2) t^{\delta-\mu-1}
\end{align*}
for all $t\geq 1.$ The constant $C_{\delta}$ depends also on $R,$ $a,$ $b,$ and $n.$
\end{thm}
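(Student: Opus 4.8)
The plan is to run a weighted-energy (multiplier) argument with the space--time Gaussian weight dictated by the subsolution $A$, exactly in the spirit of \cite{RTY}; Hypothesis A is precisely what makes this weight admissible. Fix $\delta>0$, set $\lambda=\mu-\delta$, and introduce
\[
\phi(x,t)=\frac{\lambda A(x)}{t},\qquad g(x,t)=e^{\phi(x,t)},\qquad t\ge 1 .
\]
The computation rests on a single pointwise inequality for $\phi$. Since $-\phi_t=\lambda A/t^{2}$ and $b|\nabla\phi|^{2}=\lambda^{2}b|\nabla A|^{2}/t^{2}$, property (a3) in (\ref{E:div}) gives $b|\nabla A|^{2}\le(\mu^{-1}+o(1))\,aA$ for large $|x|$, whence
\[
b|\nabla\phi|^{2}\ \le\ \frac{\lambda}{\mu}\,(1+o(1))\,a\,(-\phi_t)\ \le\ \kappa\, a\,(-\phi_t)
\]
for some $\kappa<1$, because $\lambda/\mu=1-\delta/\mu<1$. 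By the finite speed of propagation from the Appendix (with $\gamma=0$) the support of $u(\cdot,t)$ lies in $\{|x|\le R+Ct^{2/(2-\beta)}\}$, so this inequality holds on the whole expanding support once $t$ is large; the bounded-$x$ region is harmless since there $A\ge 0$ is bounded and $-\phi_t\ge 0$. This is the diffusive eikonal condition that drives the entire estimate.

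First I would differentiate the weighted energy using the multiplier $g\,u_t$ in (\ref{waveL1}). Integrating by parts and writing $g_t=\phi_t g$, $\nabla g=g\nabla\phi$, the only dangerous contribution is the cross term $\int g\,u_t\,b\nabla\phi\cdot\nabla u\,dx$; bounding it by Cauchy--Schwarz and inserting $b|\nabla\phi|^{2}\le\kappa a(-\phi_t)$ absorbs it against the damping $\int g\,a\,u_t^{2}$ and against $(-\phi_t)\int g\,b|\nabla u|^{2}$, leaving
\[
\tfrac12\tfrac{d}{dt}\int g\,(u_t^{2}+b|\nabla u|^{2})\,dx+\Big(-\tfrac12\!\int\!\phi_t g\,u_t^{2}\Big)+\tfrac{1-\kappa}{2}\Big(-\!\int\!\phi_t g\,b|\nabla u|^{2}\Big)+\tfrac12\!\int g\,a\,u_t^{2}\,dx\ \le\ 0 .
\]
All dissipative terms carry the factor $-\phi_t=\phi/t$, which is the mechanism producing the extra decay. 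Next I would run the multiplier $g\,u$, producing the mixed quantity $\int g\,u\,u_t\,dx$ together with $\int g\,a\,u^{2}\,dx$ and $\int g\,b|\nabla u|^{2}\,dx$; here the cross term $\int g\,u\,b\nabla\phi\cdot\nabla u\,dx$ is absorbed in the same way.

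To extract the rates I would form the time-weighted functional $t^{m}\big[\int g(u_t^{2}+b|\nabla u|^{2})\,dx+\eta\,(\int g\,u\,u_t\,dx+\tfrac12\int g\,a\,u^{2}\,dx)\big]$, with $\eta>0$ small and $m=\mu+1-\delta$. Differentiating and using both identities, the term created by $\tfrac{d}{dt}t^{m}$ is $\tfrac{m}{t}t^{m}(\cdots)$; since the dissipation supplies $-\phi_t(\cdots)=\tfrac1t\phi(\cdots)$ with effective constant governed by $\kappa$ (hence by $\mu$), choosing $\eta$ and the threshold time appropriately makes this functional non-increasing. Evaluating at $t\ge 1$ and discarding the nonnegative lower-order pieces yields $\int g(u_t^{2}+b|\nabla u|^{2})\,dx\lesssim t^{\delta-\mu-1}$, and feeding this back into the $g\,u$ identity (integrated in time) gives $\int g\,a\,u^{2}\,dx\lesssim t^{\delta-\mu}$; the initial constants collapse to $\|\nabla u_0\|_{L^2}^2+\|u_1\|_{L^2}^2$ because $g(\cdot,1)$ is bounded on the compact initial support. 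This is precisely the pair of estimates in Theorem \ref{mainthm}.

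The main obstacle is making the single inequality $b|\nabla\phi|^{2}\le\kappa a(-\phi_t)$, which holds only asymptotically via (a3), valid uniformly on the \emph{moving} support for all $t\ge t_0$, and then bookkeeping the coupling constant $\eta$ and the power $m$ so that the $o(1)$ error in (a3) together with the $\tfrac{m}{t}t^{m}$ term is strictly dominated by the $(1-\kappa)$ surplus in the dissipation; securing the sharp exponent $\mu$ rather than a smaller one is exactly what forces the $\delta$-loss and the restriction $\lambda=\mu-\delta<\mu$.
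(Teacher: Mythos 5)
You have correctly identified the method behind this theorem: the paper does not prove Theorem \ref{mainthm} here but imports it from \cite{RTY}, and the \cite{RTY} proof is indeed a weighted multiplier argument with the weight $g=e^{(\mu-\delta)A(x)/t}$, the multipliers $g\,u_t$ and $g\,u$, and the eikonal-type inequality $b|\nabla\phi|^2\le \kappa\,a(-\phi_t)$ with $\kappa=\tfrac{\lambda}{\mu}(1+o(1))<1$ coming from (a3). However, your sketch has a genuine gap: you never use the subsolution inequality (\ref{E}), $\text{div}(b\nabla A)\ge a$, which is the core of Hypothesis A. Without it the first estimate of the theorem is unreachable: there is no Poincar\'e inequality on $\Real^n$, so the multiplier identities by themselves never produce a coercive term $\int g\,a u^2\,dx$. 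In \cite{RTY} that term is generated precisely by writing $\int g\,au^2\,dx\le\int g\,\text{div}(b\nabla A)\,u^2\,dx$ and integrating by parts, after which the resulting $b|\nabla A|^2 u^2$ terms are converted via (a3) into $aA\,u^2$ terms that couple to the $-\phi_t$ dissipation; this Hardy-type step is also where the exponent $\mu$ actually enters the rate, not merely through the constant $\kappa$ in the absorption. Relatedly, your absorption scheme for the energy cannot close as written: the dissipative terms carrying $-\phi_t=\lambda A/t^2$ dominate $\tfrac{m}{t}\,t^{m}\int g\,b|\nabla u|^2\,dx$ only on the region where $A(x)\gtrsim t$; where $A(x)\ll t$ nothing in your functional controls the potential energy unless the $g\,u$ identity is coupled in with a \emph{time-dependent} small factor $\eta(t)$ (a constant $\eta$ fails because the created term $\eta\int g\,u_t^2$ must be absorbed by $\int g\,a u_t^2$ and $a(x)\to 0$ on the expanding support; on the light cone $a\gtrsim t^{-2\alpha/(2-\beta)}$, and it is exactly hypothesis (\ref{c1}), i.e.\ $2\alpha+\beta\le 2$ and $\alpha<1$, that makes this $\gtrsim t^{-1}$). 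Your argument never invokes (\ref{c1}) at all, which is a structural sign it cannot be complete.

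The ordering of your final bootstrap is also wrong. Integrating the $g\,u$ identity in time \emph{after} knowing $\int g(u_t^2+b|\nabla u|^2)\,dx\lesssim t^{\delta-\mu-1}$ gives $\int_1^T\!\!\int g\,u_t^2\,dx\,dt\lesssim 1$ and hence only \emph{boundedness} of $\int g\,au^2\,dx$, not the decay $t^{\delta-\mu}$. In \cite{RTY} the logic runs the other way: one first proves the weighted $L^2$ bound $\int g\,au^2\,dx\lesssim t^{\delta-\mu}$, using (\ref{E}) and (a3) to absorb the bad term $(\mu-\delta)t^{-1}\int g\,au^2\,dx$ created by the weight $t^{\mu-\delta}$, and only then feeds this into the $t^{\mu+1-\delta}$-weighted energy functional to obtain the energy rate. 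Finally, your dismissal of the bounded-$x$ region is too quick: the pointwise inequality $b|\nabla\phi|^2\le\kappa a(-\phi_t)$ can degrade there since (a3) is only a liminf at infinity, though this part is repairable because on compact sets $a$ is bounded below and $|\nabla\phi|\lesssim t^{-1}$, so those contributions can be absorbed directly by the damping.
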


An immediate consequence of the above theorem which follows by (a1) is
\begin{cor}\label{cor4} Under the assumptions of Theorem \ref{mainthm}, we have
\begin{align}
 \label{engdec}&\int a(x)u^2\ dx  \leq  C_\delta(\|\nabla
 u_0\|_{L^2}^2+\|u_1\|_{L^2}^2)t^{\delta-\mu},\\
 \label{dampdec}&E(t;u)=\int u_t^2+b(x)|\nabla u|^2 \ dx \leq
 C_\delta(\|\nabla u_0\|_{L^2}^2+\|u_1\|_{L^2}^2) t^{\delta-\mu-1}.
\end{align}
\end{cor}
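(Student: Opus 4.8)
Since the section explicitly presents this as an application of \cite{RTY}, the statement of Theorem~\ref{mainthm} is essentially the main weighted-energy estimate proved there, and the primary route is simply to quote it; for completeness I would reconstruct the underlying weighted multiplier argument. The plan is to fix a small $\delta>0$, set $\lambda=\mu-\delta>0$, and introduce the Gaussian-type weight $\phi(x,t)=\lambda A(x)/t$ built from the subsolution of Hypothesis~A, together with
\[
\mathcal H(t)=\int e^{\phi}\,a\,u^2\,dx,\qquad \mathcal E(t)=\int e^{\phi}\,(u_t^2+b|\nabla u|^2)\,dx .
\]
Because $A\ge 0$ by (a1) we have $e^{\phi}\ge 1$, so $\mathcal H$ and $\mathcal E$ dominate the unweighted quantities and Corollary~\ref{cor4} is immediate once the two bounds on $\mathcal H,\mathcal E$ are established. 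The finite-speed-of-propagation corollary of the Appendix keeps $u(\cdot,t)$ compactly supported, which both makes all integrations by parts legitimate (no boundary terms at infinity) and shows that at $t=1$ the weight $e^{\lambda A}$ is bounded on $\{|x|\le R+C\}$; this is what produces the data dependence $\mathcal H(1)+\mathcal E(1)\lesssim \|\nabla u_0\|_{L^2}^2+\|u_1\|_{L^2}^2$ and the dependence of $C_\delta$ on $R,a,b,n$.

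The engine of the proof is a pair of weighted identities obtained by testing \eqref{waveL1} against $e^{\phi}u$ and against $e^{\phi}u_t$. The first multiplier produces the decisive decay term: after integrating by parts, the elliptic contribution contains
\[
\int e^{\phi}\,b\,(\nabla\phi\cdot\nabla u)\,u
=-\tfrac12\int u^2\,e^{\phi}\bigl[\text{div}(b\nabla\phi)+b|\nabla\phi|^2\bigr],
\]
and here the subsolution inequality \eqref{E} enters through $\text{div}(b\nabla\phi)=\tfrac{\lambda}{t}\,\text{div}(b\nabla A)\ge \tfrac{\lambda}{t}\,a$, which feeds a favorable term $\le-\tfrac{\lambda}{2t}\mathcal H$ into the evolution of $\mathcal H$. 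The second multiplier yields the energy identity, in which the only dangerous term is the cross term $\int e^{\phi}b(\nabla\phi\cdot\nabla u)u_t$; a Cauchy--Schwarz splitting reduces its control to bounding the coefficient $\tfrac{b|\nabla\phi|^2}{a}=\tfrac{\lambda^2}{t^2}\,\tfrac{b|\nabla A|^2}{a}$. This is exactly where property (a3) is used: $\tfrac{b|\nabla A|^2}{a}\le \mu^{-1}A\,(1+o(1))$ for large $|x|$, so the coefficient is $\le \tfrac{\lambda}{\mu}\,\tfrac{\phi}{t}\,(1+o(1))$ with a strictly subunit constant $\lambda/\mu<1$, leaving room to absorb the cross term against the good $\phi_t$-- and subsolution--terms. (The analogous heat-kernel computation, where $A\sim|x|^2$ and convergence of the weighted integral already forces $\lambda<\mu$, confirms that this gap is the correct threshold.)

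Collecting the two identities into a single functional such as $\mathcal E+\tfrac{\kappa}{t}\mathcal H$ (with $\kappa$ chosen from the constants above), I expect to reach a differential inequality whose integration from $t=1$ gives first $\mathcal H(t)\lesssim t^{-\lambda}\bigl(\|\nabla u_0\|_{L^2}^2+\|u_1\|_{L^2}^2\bigr)$ --- the first displayed estimate --- and then, exploiting that the damping buys one extra power of $t$ exactly as in Proposition~\ref{propE1}, the energy bound $\mathcal E(t)\lesssim t^{-\lambda-1}(\cdots)$, which is the second estimate since $-\lambda-1=\delta-\mu-1$. The main obstacle is the quantitative balancing in the previous paragraph: since (a3) is only a $\liminf$ at infinity, the estimate $\tfrac{b|\nabla A|^2}{a}\le \mu^{-1}A\,(1+o(1))$ holds only in the far field, so one must split off a compact region (where $A$, $|\nabla A|$ and $e^{\phi}$ are all bounded and the offending terms are lower order) and then choose $\delta$ and the threshold radius so that the strict inequality $\lambda<\mu$ survives uniformly in $t\ge 1$. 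Keeping this balance compatible with the support growth $|x|\lesssim t^{2/(2-\beta)}$ (here $\gamma=0$) and with the growth $A=O(|x|^{2-\alpha-\beta})$ from (a2) is what forces the structural restrictions \eqref{c1} on $\alpha$ and $\beta$.
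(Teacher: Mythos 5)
Your proposal is correct and uses exactly the paper's own argument for the corollary: since $A(x)\ge 0$ by (a1), the weight $e^{(\mu-\delta)A(x)/t}\ge 1$, so the unweighted integrals in Corollary~\ref{cor4} are dominated by the weighted ones in Theorem~\ref{mainthm}, and the two bounds follow immediately. The extended reconstruction of the weighted multiplier proof of Theorem~\ref{mainthm} itself is superfluous for this statement (the corollary is posed under that theorem's hypotheses, so the theorem may simply be quoted, as the paper does via \cite{RTY}), though your sketch is consistent with that cited argument.
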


These results in conjunction with the decay estimates presented in Sections 3--5 yield the following:
\begin{thm} Assume that $a,b$ satisfy (\ref{abcL}) and (\ref{c1}). Then for every $\delta>0$ and $T_0>0$ sufficiently large the following inequalities hold:
\begin{enumerate}
\item[(i)] Weighted (in time) L$^2$ energy decay:
\begin{equation}\label{wegna}
\int_{T_0}^T  (1+t)^{\theta+2k}E(t;\partial^{k}_t u)\: dt \lesssim (1+T_0)^{\nu}\sum_{i=0}^k E(T_0;\partial^{i}_t u)  +\frac{T^{\theta+\delta-\mu}-T_0^{\theta+\delta-\mu}}{\theta+\delta-\mu}.
\end{equation}
\item[(ii)] Pointwise (in time) energy decay:
\begin{equation}\label{phega}
E(T;\partial^{k}_t u) \lesssim (1+T)^{-\theta-2k-1}\left[(1+T_0)^{\nu}\sum_{i=0}^k E(T_0;\partial^{i}_t u) +\frac{T^{\theta+\delta-\mu}-T_0^{\theta+\delta-\mu}}{\theta+\delta-\mu} \right].
\end{equation}
\item[(iii)] Weighted kinetic energy decay (the energy associated with the damping):
\begin{align}\label{damphdec}
\int a u_t^2  dx & \lesssim [E(t;u)E(t;u_t)]^{1/2} \\
\nonumber  &\lesssim (1+T)^{-\theta-2}\left[(1+T_0)^{\nu}\sum_{i=0}^k E(T_0;\partial^{i}_t u) +\frac{T^{\theta+\delta-\mu}-T_0^{\theta+\delta-\mu}}{\theta+\delta-\mu} \right].
\end{align}
\item[(iv)] Decay of spatial derivatives (recall that $Mu=a^{-1}\left[\text{div}(b\nabla u)\right] $):
\begin{eqnarray}
&   &  \int_{T_0}^T\int (1+t)^{\theta+1} a(Mu)^2\: dx dt  \\
\nonumber & \lesssim & (1+T_0)^{\theta+1}\left[\sum_{i=0}^1 E(T_0;\partial^{i}_t u)\right] +
\frac{T^{\theta+\delta-\mu}-T_0^{\theta+\delta-\mu}}{\theta+\delta-\mu},\\
&   & \int_{T_0}^T\int (1+t)^{\theta+3-\lambda_2} a(M^2u)^2\: dx dt \\
\nonumber &\lesssim & (1+T_0)^{\theta+3}\left[\sum_{i=0}^3 E(T_0;\partial^{i}_t u)\right]
+\frac{T^{\theta+\delta-\mu}-T_0^{\theta+\delta-\mu}}{\theta+\delta-\mu}.
\end{eqnarray}

\end{enumerate}
\end{thm}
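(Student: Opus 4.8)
The plan is to obtain this theorem as a direct assembly of the general estimates of Sections~3--5 with the explicit first-energy decay furnished by Corollary~\ref{cor4}, specialized to $c=1$, $h=0$. The unifying remark is that each weighted and pointwise bound derived earlier carries exactly one ``driving'' term, namely $\int_{T_0}^T (1+t)^\theta E(t;u)\,dt$, while under the present hypotheses every source integral weighted by $a^{-1}(\partial_t^i h)^2$ vanishes identically. Hence the entire proof reduces to replacing this driving term by an explicit power of $T$.

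First I would record from Corollary~\ref{cor4}, inequality (\ref{dampdec}), that $E(t;u)\lesssim (\|\nabla u_0\|_{L^2}^2+\|u_1\|_{L^2}^2)\,t^{\delta-\mu-1}$ for $t\geq 1$. Choosing $T_0\geq 1$ and using $(1+t)\sim t$ on $[T_0,T]$, I would then perform the elementary integration
\[
\int_{T_0}^T (1+t)^\theta E(t;u)\,dt \lesssim \int_{T_0}^T t^{\theta+\delta-\mu-1}\,dt = \frac{T^{\theta+\delta-\mu}-T_0^{\theta+\delta-\mu}}{\theta+\delta-\mu},
\]
which is exactly the inhomogeneous contribution displayed in parts (i)--(iv); the borderline $\theta+\delta-\mu=0$ is read as the limiting value $\log(T/T_0)$. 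Feeding this bound into Proposition~\ref{prop1} (estimate (\ref{wegn})) with $h=0$ proves (i), and feeding it into the pointwise estimate (\ref{pheg}) with $h=0$ proves (ii).

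For (iii) I would start from the bound $\int a u_t^2\,dx \lesssim [E(t;u)E(t;u_t)]^{1/2}$ (the $h=0$ case of the corresponding proposition of Section~3) and substitute the pointwise estimates just obtained: (\ref{phega}) with $k=0$ gives $E(T;u)\lesssim (1+T)^{-\theta-1}[\cdots]$ and with $k=1$ gives $E(T;u_t)\lesssim (1+T)^{-\theta-3}[\cdots]$, so the geometric mean produces the prefactor $(1+T)^{-\theta-2}$ of (\ref{damphdec}); the two brackets are combined by noting that the $k=1$ bracket, with the larger sum $\sum_{i=0}^1$, dominates. For (iv) I would specialize the weighted $Mu$ and $M^2u$ estimates of Section~4 to $h=0$, again replacing their driving term $\int_{T_0}^T (1+t)^\theta E(t;u)\,dt$ by the power of $T$ computed above, the initial-data blocks $(1+T_0)^{\theta+1}$ and $(1+T_0)^{\theta+3}$ being carried through unchanged.

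The one step requiring genuine care---and the main obstacle---is verifying that the hypotheses of the imported propositions hold under the standing assumption (\ref{c1}) with $c=1$, i.e.\ $\gamma=0$. Here $c/a\sim (1+|x|)^\alpha$ and $(b/a)|\nabla(c/a)|^2\sim (1+|x|)^{3\alpha+\beta-2}$ on the support $|x|\lesssim R+Ct^{2/(2-\beta)}$, so the requirement (\ref{CC1}) with $\lambda_1\leq 1$ reduces precisely to $2\alpha+\beta\leq 2$, the last inequality of (\ref{c1}); an analogous computation selects an admissible $\lambda_2\leq 1$ for (\ref{CC2}). I would also note that (\ref{c1}) is marginally weaker than (\ref{abg}) at the endpoints $\beta=0$ and $2\alpha+\beta=2$, which are absorbed by the freedom in the small parameter $\delta>0$. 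Everything else is routine substitution and relabeling of the implied constants.
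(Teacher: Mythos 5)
Your proposal is correct and follows essentially the same route as the paper, which offers no separate argument for this theorem beyond stating that it follows from combining the estimates of Sections~3--5 (with $h=0$, so all source integrals vanish) with Corollary~\ref{cor4}: you perform exactly that assembly, including the elementary integration $\int_{T_0}^T t^{\theta+\delta-\mu-1}\,dt$ that produces the common inhomogeneous term, the geometric-mean step for (iii), and the verification of (\ref{CC1})--(\ref{CC2}) under $2\alpha+\beta\leq 2$, which the paper does not even spell out. The only loose point is your remark that the endpoint cases $\beta=0$ and $2\alpha+\beta=2$ (where the strict condition (\ref{abg}), needed for the weight $W$ via (\ref{omega}), fails) are ``absorbed by $\delta$''---the parameter $\delta$ enters only the decay rate of Theorem~\ref{mainthm}, not the construction of $W$---but this imprecision is inherited from the paper's own hypotheses rather than introduced by you.
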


\begin{rem}
Note from the above decay estimate (\ref{phega}) that the $k$-th order energy has a polynomial decay 
of order $\mu+1+2k-\delta$, exactly $2k$ units below the decay of the first order energy as given by (\ref{engdec}).
The damping term has the decay rate $\mu+2-\delta$, as it follows from (\ref{damphdec}), which is $2$ units below the decay of the $L^2$ norm of $u$ given by (\ref{dampdec}).
Finally, the average decay rates of second and fourth order spatial derivatives resemble 
those of $u_t$ and $u_{tt},$ respectively. Here $(1+t)^{\lambda_2}$ accounts for possibly 
fast oscillations in $b(x)/a(x)$ as $|x|\rightarrow\infty$.       
\end{rem}

\appendix
\section{Bounds on the support of solutions}

Recall the non-homogeneous hyperbolic equation with damping
\begin{equation}
\label{waveLH}
 c(x)u_{tt}-\hbox{div}(b(x)\nabla u)+a(x)u_t =h,
 \quad x\in {\bf R}^n,\ \ t>0,
\end{equation}
where the coefficients $a,$ $b,$ and $c$ satisfy conditions (\ref{abcL}) in the introduction.
The propagation speed is determined by the ratio $c(x)/b(x).$ We assume the following:
\begin{equation}
\label{q}
\hbox{There exists}\ \ q\in C^1,\ \  0<q'(|x|)\leq \sqrt{\frac{c(x)}{b(x)}},\ \ x\in {\bf R}^n.
\end{equation}
A better bound on the propagation speed is given in terms of $q$
satisfying
\[
|\nabla q(x)|=\sqrt{\frac{c(x)}{b(x)}},
\]
see Evans~\cite{E}, but this equation is globally solvable only in special cases.
Fortunately the corresponding inequality is sufficient for propagation speed estimates
and solvable in most cases. It is also convenient to have
bounds depending on $|x|,$ so we look for radial
solutions $q$.

We consider $h\in C((0,\infty),L^2({\bf R}^n))$ and
\begin{equation}
\label{supph}
h(x,t)=0\ \ \hbox{if}\ \ q(|x|)> t+q(R).
\end{equation}

There exist a unique solution $u\in C((0,\infty),H^1({\bf R}^n))\cap C^1((0,\infty),L^2({\bf R}^n))$
whose support is described below.

\begin{prop}
\label{suppL} Let $u$ be a solution of (\ref{waveLH}) with data
$(u,u_t)|_{t=0}=(u_0,u_1),$ such that $(u_0,u_1)\in H^1({\bf R}^n)\times L^2({\bf R}^n)$.
If (\ref{q}) holds, then $u_0(x)=u_1(x)=0$ for $|x|>R$ implies $u(x,t)=0$ for
$q(|x|)>t+q(R).$
\end{prop}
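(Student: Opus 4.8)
The plan is to run a domain-of-dependence energy argument on a spacetime region exterior to the propagation cone, measuring the local energy in the natural form dictated by the principal part $c\,\partial_t^2-\mathrm{div}(b\nabla\cdot)$. Fix a point $(x_0,t_0)$ with $q(|x_0|)>t_0+q(R)$; it suffices to prove $u(x_0,t_0)=0$. For $0\le t\le t_0$ I introduce the shrinking exterior slices
\[
\Omega(t)=\{x\in\Real^n:\ q(|x|)\ge q(|x_0|)-(t_0-t)\},
\]
together with the local energy
\[
e(t)=\frac12\int_{\Omega(t)}\left(c\,u_t^2+b\,|\nabla u|^2\right)\,dx.
\]
Two observations fix the geometry. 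At $t=0$ the threshold $q(|x_0|)-t_0>q(R)$ forces $\Omega(0)\subset\{|x|>R\}$, where $u_0=u_1=0$; hence $e(0)=0$. On the entire region one has $q(|x|)\ge(q(|x_0|)-t_0)+t>q(R)+t$, so the support hypothesis (\ref{supph}) guarantees $h=0$ throughout. Thus both the data and the forcing drop out of the computation.

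The core step is to differentiate $e(t)$ and to show $e'(t)\le0$. By the transport theorem the boundary of $\Omega(t)$, the sphere $\{q(|x|)=q(|x_0|)-(t_0-t)\}$, moves radially outward with speed $1/q'(|x|)$, hence with velocity opposite to the outward normal $\hat n=-\hat x$ of $\Omega(t)$. Differentiating under the integral, substituting $c\,u_{tt}=\mathrm{div}(b\nabla u)-a\,u_t$ (using $h=0$), and integrating $b\,\nabla u\cdot\nabla u_t$ by parts over $\Omega(t)$, the interior divergence terms cancel and I obtain
\[
e'(t)=-\int_{\Omega(t)}a\,u_t^2\,dx-\int_{\partial\Omega(t)}\left[\,b\,u_t\,\partial_r u+\frac{1}{2q'(|x|)}\left(c\,u_t^2+b\,|\nabla u|^2\right)\right]dS.
\]
The volume term is nonpositive since $a>0$. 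For the surface term I split $\nabla u$ into radial and tangential parts; the tangential piece $\tfrac{b}{2q'}|\nabla_\perp u|^2$ is nonnegative, while the remaining quadratic form in $(u_t,\partial_r u)$,
\[
\frac{c}{2q'}\,u_t^2+b\,u_t\,\partial_r u+\frac{b}{2q'}\,(\partial_r u)^2,
\]
is positive semidefinite exactly when its discriminant condition $b^2\le bc/(q')^2$, i.e. $q'(|x|)\le\sqrt{c(x)/b(x)}$, holds. This is precisely assumption (\ref{q}), so the bracket is nonnegative and $e'(t)\le0$. With $e(0)=0$ this yields $e(t)\equiv0$ on $[0,t_0]$, so $u_t=0$ and $\nabla u=0$ on $\mathcal K=\bigcup_t\Omega(t)\times\{t\}$. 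Since the vertical segment $\{(x_0,s):0\le s\le t_0\}$ lies in $\mathcal K$, integrating $u_t=0$ in time down to the base gives $u(x_0,t_0)=u_0(x_0)$, and $x_0\in\Omega(0)\subset\{|x|>R\}$ forces $u_0(x_0)=0$; hence $u(x_0,t_0)=0$.

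I expect the main obstacle to be not this algebra — the matching of (\ref{q}) with the semidefiniteness of the boundary form is the whole point and is clean — but the regularity needed to justify the manipulations. The solution provided lies only in $C((0,\infty),H^1)\cap C^1((0,\infty),L^2)$, which is too weak to integrate by parts and to apply the transport theorem pointwise. I would therefore first approximate $(u_0,u_1)$ by smooth, compactly supported data, carry out the energy identity above for the corresponding smooth solutions (where every step is legitimate), and then pass to the limit using continuous dependence in the energy norm; alternatively one mollifies and controls commutator errors that vanish in the limit. A minor point to record is that the moving boundary remains a genuine smooth sphere bounded away from the origin, since $q(|x_0|)-(t_0-t)>q(R)\ge q(0)$ throughout, so no degeneracy of the coarea/transport formula occurs.
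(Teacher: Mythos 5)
Your proposal is correct and takes essentially the same approach as the paper: the positive semidefiniteness of your boundary quadratic form $\frac{c}{2q'}u_t^2+b\,u_t\partial_r u+\frac{b}{2q'}(\partial_r u)^2$ under $q'\le\sqrt{c/b}$ is exactly the paper's condition for the nonnegativity of the flux through the lateral conic surface $M(T)$, and your treatment of the data, of $h$ via (\ref{supph}), and of the regularity issue by approximation all match the paper's proof. The only difference is cosmetic: you differentiate a moving-domain energy via the transport theorem on cones anchored at each point $(x_0,t_0)$, while the paper integrates the same energy identity over the fixed spacetime exterior region $C(T)$ and applies the divergence theorem once.
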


Applying the above result, we can find more explicit bounds when $c(x)/b(x)$ is
not very small.

\begin{cor}
\label{q>}
Let $u$ be a solution of (\ref{waveLH}) with data
$(u,u_t)|_{t=0}=(u_0,u_1),$ such that $(u_0,u_1)\in H^1({\bf R}^n)\times L^2({\bf R}^n)$.
Assume that $b$ and $c$ satisfy (\ref{abcL}) with $\beta+\gamma<2.$ There
exists $q$ satisfying (\ref{q}) and
\[
q(|x|)\geq q_0 |x|^{1-(\beta+\gamma)/2}, \quad x\in {\bf R}^n,
\]
with $q_0>0.$ Hence, $u_0(x)=u_1(x)=0$ for $|x|>R$ implies $u(x,t)=0$ for
$$|x|>\left(\frac{t+q(R)}{q_0}\right)^{2/(2-\beta-\gamma)}.$$
\end{cor}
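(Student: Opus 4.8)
The plan is to obtain the corollary from Proposition~\ref{suppL} by producing an explicit radial $q$ that simultaneously verifies the speed condition (\ref{q}) and satisfies the claimed power lower bound. Abbreviate $\sigma=(\beta+\gamma)/2$; since $\beta+\gamma<2$ we have $0\le\sigma<1$, hence $1-\sigma>0$. The first step is to replace the (possibly direction-dependent) local speed $\sqrt{c(x)/b(x)}$ by a radial lower bound: from (\ref{abcL}), $c(x)/b(x)\ge (c_0/b_1)(1+|x|)^{-\beta-\gamma}$, so
\[
\sqrt{\frac{c(x)}{b(x)}}\ \ge\ \sqrt{\frac{c_0}{b_1}}\,(1+|x|)^{-\sigma},\qquad x\in\Real^n .
\]
The right-hand side depends on $|x|$ only, which is exactly what permits a radial choice of $q$.

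Next I would integrate this guaranteed speed radially, setting
\[
q(r)=\sqrt{\frac{c_0}{b_1}}\int_0^r(1+s)^{-\sigma}\,ds
     =\frac{\sqrt{c_0/b_1}}{1-\sigma}\bigl[(1+r)^{1-\sigma}-1\bigr],\qquad r=|x|.
\]
By construction $q\in C^1$ is increasing with $q'(r)=\sqrt{c_0/b_1}\,(1+r)^{-\sigma}$, so $0<q'(r)\le\sqrt{c(x)/b(x)}$ by the previous display; thus (\ref{q}) holds and Proposition~\ref{suppL} is applicable.

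The third step is the power bound $q(r)\ge q_0 r^{1-\sigma}$. For $r$ large, $1\le\tfrac12(1+r)^{1-\sigma}$, so $(1+r)^{1-\sigma}-1\ge\tfrac12(1+r)^{1-\sigma}\ge\tfrac12 r^{1-\sigma}$, giving the bound with $q_0=\tfrac12\sqrt{c_0/b_1}/(1-\sigma)$. With this in hand the conclusion is immediate: using $1/(1-\sigma)=2/(2-\beta-\gamma)$, the inequality $|x|>\left((t+q(R))/q_0\right)^{2/(2-\beta-\gamma)}$ is equivalent to $q_0|x|^{1-\sigma}>t+q(R)$, whence $q(|x|)\ge q_0|x|^{1-\sigma}>t+q(R)$ and Proposition~\ref{suppL} forces $u(x,t)=0$.

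I expect the one genuinely delicate point to be the behavior near the origin. Because $\sqrt{c(0)/b(0)}$ is finite, condition (\ref{q}) forces $q'$ to be bounded near $0$, so $q(r)=O(r)$ as $r\to0$; since $r^{1-\sigma}$ dominates $r$ for small $r$, the inequality $q(r)\ge q_0 r^{1-\sigma}$ cannot hold for all small $r$ with a fixed $q_0$. I would therefore make explicit that the power bound, and hence the corollary's support estimate, is needed only for large $|x|$ (the regime in which the threshold $\left((t+q(R))/q_0\right)^{2/(2-\beta-\gamma)}$ lies), and carry the term $q(R)$ on the right throughout so that the near-origin discrepancy never enters the support conclusion. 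The only other routine check is that the radial lower bound on the speed genuinely dominates $q'$ in every direction, so that a single radial $q$ controls propagation uniformly in the angular variable.
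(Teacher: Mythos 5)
Your construction is essentially the paper's: bound the local speed below by the radial function $\sqrt{c_0/b_1}\,(1+|x|)^{-(\beta+\gamma)/2}$ using (\ref{abcL}), take $q$ to be an antiderivative of that minorant, and feed the resulting power bound into Proposition~\ref{suppL}. However, the ``delicate point'' you flag near the origin is self-inflicted, and your patch for it does not fully close the argument. Condition (\ref{q}) requires only $q\in C^1$ and $0<q'(|x|)\leq\sqrt{c(x)/b(x)}$; it imposes no normalization at $r=0$. So you are free to drop $q(0)=0$ and take $q(r)=\sqrt{c_0/b_1}\,\frac{2}{2-\beta-\gamma}\,(1+r)^{1-(\beta+\gamma)/2}$, i.e.\ your antiderivative without the $-1$. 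Since $1+r\geq r$, this $q$ satisfies $q(r)\geq q_0\,r^{1-(\beta+\gamma)/2}$ for \emph{all} $r\geq 0$ with $q_0=\sqrt{c_0/b_1}\,\frac{2}{2-\beta-\gamma}$, and the corollary then follows exactly as in your final display, with no case analysis. This is in effect what the paper does: its displayed formula for $q$ is precisely this unnormalized antiderivative (the line ``$q(0)=0$'' in the paper's proof is inconsistent with its own displayed solution, and it is the displayed solution that makes the global bound trivial).

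The concrete gap in your version is twofold. First, the corollary asserts $q(|x|)\geq q_0|x|^{1-(\beta+\gamma)/2}$ for all $x\in\Real^n$, and for your normalized $q$ this is genuinely false near the origin whenever $\beta+\gamma>0$ (as you correctly observe), so you prove a strictly weaker statement than the one claimed. Second, your fallback --- that the power bound is needed only ``in the regime where the threshold lies'' --- is not justified as stated: writing $\sigma=(\beta+\gamma)/2$, your bound with $q_0=\tfrac12\sqrt{c_0/b_1}/(1-\sigma)$ holds only for $r$ with $(1+r)^{1-\sigma}\geq 2$, while for small $R$ and small $t$ the threshold radius $\left((t+q(R))/q_0\right)^{2/(2-\beta-\gamma)}$ can lie below that range; at such points the chain $q_0|x|^{1-\sigma}>t+q(R)\Rightarrow q(|x|)\geq q_0|x|^{1-\sigma}$ breaks down, so Proposition~\ref{suppL} cannot be invoked there. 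One could repair this by shrinking $q_0$ in an $R$-dependent way (pushing the threshold out to where the asymptotic bound holds), but the one-line fix above --- choosing the unnormalized antiderivative, as the paper does --- is cleaner and yields the statement exactly as written. Your remaining check, that the radial minorant controls the speed uniformly in the angular variable, is correct and is the same observation the paper relies on.
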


{\em Proof of Proposition~\ref{suppL}.}
We can assume that $(u_0,u_1)$ and $h$ are more regular, so
$u\in C^1((0,\infty),H^1({\bf R}^n))\cap C^2((0,\infty),L^2({\bf R}^n));$
the general case follows from a standard approximation argument.
Multiplying (\ref{waveLH}) with $u_t$, we obtain
\[
\frac{1}{2}\left(c(x)u_{t}^2+b(x)|\nabla u|^2\right)_t-
\hbox{div}(b(x)u_t\nabla u)+a(x)u_t^2 =0.
\]
This identity will be integrated over the exterior of a suitable
truncated cone. Choose $q$ satisfying (\ref{q}) and consider the conic surface
\[
M(T)=\{(x,t)\: : \: T>t>0\ \hbox{and}\ q(|x|)= t+q(R)\},
\]
which is the lateral boundary of the space-time region
\[
C(T)=\{(x,t)\: : \: T>t>0\ \hbox{and}\ q(|x|)> t+q(R)\}.
\]
The outer normal to $M(T)$ at $(x,t)$ is given by the $(n+1)$-vector
\[
\frac{1}{\sqrt{1+[q'(|x|)]^2}}\left(-q'(|x|)\frac{x}{|x|}, 1\right).
\]

We integrate the energy identity over $C(T)$ and apply the
divergence theorem:
\begin{eqnarray*}
& & \left.\frac{1}{2}\int_{q(|x|)>
t+q(R)}\left(c(x)u_{t}^2+b(x)|\nabla u|^2\right)\: dx\right|_{t=T}
+ \int_{C(T)}a(x)u_{t}^2\: dxdt\\
 & & + \int_{M(T)}\left(\frac{c(x)}{2}u_{t}^2+\frac{b(x)}{2}|\nabla
u|^2+b(x)q'(|x|)u_t\frac{x}{|x|}\cdot \nabla u\right)\:
\frac{dS}{\sqrt{1+[q'(|x|)]^2}}\\
\\
= & & \left.\frac{1}{2}\int_{q(|x|)>
t+q(R)}\left(c(x)u_{t}^2+b(x)|\nabla u|^2\right)\: dx\right|_{t=0}
+\int_{C(T)}hu_{t}\: dxdt,
\end{eqnarray*}
for all $T>0.$ The integral over $M(T)$ is non-negative, since
\[
 \frac{c(x)}{2}u_{t}^2+\frac{b(x)}{2}|\nabla u|^2+b(x)q'(|x|)u_t\frac{x}{|x|}\cdot \nabla
 u\geq 0
\]
for all $u_t$ and $\nabla u$ whenever $(bq')^2\leq cb,$ or $0<q'\leq \sqrt{c/b}.$
Thus,
\begin{eqnarray*}
& & \frac{1}{2}\int_{q(|x|)> T+q(R)}\left(c(x)u_{T}^2+b(x)|\nabla u|^2\right)\: dx
+ \int_{C(T)}a(x)u_{t}^2\: dxdt\\
 &  \leq & \frac{1}{2}\int_{q(|x|)>q(R)}\left(c(x)u_{1}^2+b(x)|\nabla u_0|^2\right)\: dx
 +\int_{C(T)}hu_{t}\: dxdt.
 \end{eqnarray*}

Notice that $|hu_t|\leq au_t^2/2+h^2/(2a).$ Hence,
\begin{eqnarray*}
& & \frac{1}{2}\int_{q(|x|)> T+q(R)}\left(c(x)u_{T}^2+b(x)|\nabla u|^2\right)\: dx\\
&  \leq & \frac{1}{2}\int_{q(|x|)>q(R)}\left(c(x)u_{1}^2+b(x)|\nabla u_0|^2\right)\: dx
 +\int_{C(T)}\frac{h^2}{2a} \: dxdt.
 \end{eqnarray*}
The integrands on the right side are $0$ due to our assumptions on $u_0,$
$u_1,$ and $h.$ This shows that $u(x,T)=0$ if $q(|x|)>T+q(R).$ \qed

{\em Proof of Corollary~\ref{q>}.} We already know that $u(x,t)=0$ if $q(|x|)>t+q(R),$
where $q'(|x|)\leq \sqrt{c(x)/b(x)}.$ It is sufficient to find $q$, such that
\[
q'(|x|) = \sqrt{\frac{c_0}{b_1}}(1+|x|)^{-(\beta+\gamma)/2},\quad q(0)=0,
\]
where the constants are given in (\ref{abcL}). Solving for $q$, we obtain
\[
q(|x|) = \sqrt{\frac{c_0}{b_1}}\frac{2}{2-\beta-\gamma}(1+|x|)^{1-(\beta+\gamma)/2}
\geq q_0|x|^{1-(\beta+\gamma)/2},
\]
for some $q_0>0.$
\qed


\end{document}